\newtheorem{Thm}{Theorem}[section]
\newtheorem{Cor}[Thm]{Corollary}
\newtheorem{Lm}[Thm]{Lemma}
\newtheorem{Prop}[Thm]{Proposition}
\newtheorem{LetterThm}{Theorem}
\theoremstyle{definition}
\newtheorem{Defn}[Thm]{Definition}
\newtheorem{Rem}[Thm]{Remark} 
\author{Diego Yépez}
\title[The Roots of Rank-Three Bundles on $\mathbb{P}_{\mathbb{C}}^{1}$ arising from Monodromy Representations]{Monodromy Representations: Decomposing Rank-Three Bundles over the Projective Line with Three Marked Points}
\thanks{The author is supported by the IDA Postdoctoral Fellowship}
\address{Department of Mathematics, Princeton University, Princeton, NJ 08544}
\email{dy9534@princeton.edu}
\address{IDA Center for Communications Research - Princeton, Princeton, NJ 08540}
\email{d.yepez@idaccr.org}
\begin{document}

\begin{abstract}
Given a monodromy representation $\rho$ of the projective line minus $m$ points, one can extend the resulting vector bundle with connection map canonically to a vector bundle with logarithmic connection map over all of the projective line. 
Now, since vector bundles split as twisting sheaves over the projective line, the focus of this work regards knowing the exact decomposition; i.e., computing the roots.
Particularly, we use the monodromy derivative to compute the roots for all three-dimensional $\rho$ when $m = 3$. 
\end{abstract}

\date{\today}
\subjclass[2020]{14F06; 32L10; 53B15}
\keywords{Logarithmic Connection; Monodromy Derivative; Monodromy Representation}

\maketitle

\tableofcontents

\section{Introduction}

Allow $\mathbb{P}^{1} := \mathbb{P}_{\mathbb{C}}^{1}$, and denote by $\mathbb{P}^{1}_{(p_{1}\cdots p_{m})}$ the $m$th-punctured complex projective line. Upon selecting a branch of logarithm and given a monodromy representation $\rho:\pi_{1}(\mathbb{P}^{1}_{(p_{1}\cdots p_{m})}) \rightarrow \text{GL}_{n}({\mathbb{C}})$ of $\mathbb{P}^{1}_{(p_{1}\cdots p_{m})}$, one can, following \cite{Deligne2006}, obtain a vector bundle with logarithmic connection map $(\mathcal{V}_{\text{Log}(\rho)}, \nabla_{\text{Log}(\rho)})$ over $\mathbb{P}^{1}$, which we call \textit{the associated extended logarithmic connection}. 
As is wellknown, every vector bundle over $\mathbb{P}^{1}$ decomposes into a direct sum of line bundles parametrized by $\mathbb{Z}$ \cite{G1957}. 
Naturally, one may inquire about the decomposition of the associated extended logarithmic vector bundle of a monodromy representation of $\mathbb{P}^{1}_{(p_{1}\cdots p_{m})}$. In the context of the terminology introduced in \cite{DY25}, which is derived from the terminology in the setting of vector-valued modular forms (e.g see \cite{CF2017} and \cite{FR2020}), we seek to express the decomposition as: 
\begin{center}
    \begin{eqnarray}
        \nonumber
        \mathcal{V}_{\text{Log}(\rho)} \cong \bigoplus\limits_{i = 1}^{n} \mathcal{O}(\xi_{i})
    \end{eqnarray}
\end{center}
where the $\xi_{i}$ are the \textit{roots} associated with $\rho$. 

As noted by Nitsure \cite{Lak2003}, by the results of \cite{NS1965} and \cite{MS1980}, there exists a bijective correspondence between the set of isomorphism classes of stable parabolic bundles of parabolic degree $0$ over $\mathbb{P}^{1}$ with $m$ marked points and the set of conjugacy classes of irreducible unitary monodromy representations of $\mathbb{P}^{1}_{(p_{1}\cdots p_{m})}$. 
Given the extensive literature on the moduli space of stable parabolic bundles of parabolic degree $0$ (e.g., see \cite{DP2022}, \cite{Inaba2006}, \cite{Inaba2007}, \cite{KLS2022}, \cite{TZ2008}), several articles have provided partial results relevant to our investigation, in particular, \cite{Hu2024}, \cite{Mats2024}, and \cite{MT2021}. 
The authors of \cite{Hu2024} provide the roots of irreducible unitary representations of dimension two arising from removing four or more points; in \cite{Mats2024} Matsumoto finds the roots of irreducible unitary representations of dimension three with an additional strong condition on the trace of the logarithm of the generators and arising from the removal of three points; and the writers of \cite{MT2021} provide an upper and lower bound $-m < \xi_{i} < 0$ for the roots of irreducible unitary representations of dimension $n$ with an additional condition and arising from the removal of $m$ points. 

Departing from the study of stable parabolic bundles of parabolic degree $0$, in \cite{DY25}, we introduce a systematic approach for determining the roots of any monodromy representation of $\mathbb{P}^{1}_{(p_{1}\cdots p_{m})}$ without the constraints of being unitary, irreducible, or otherwise. 
In \cite[\S $5$]{DY25} we construct, for each monodromy representation $\rho$ of $\mathbb{P}^{1}_{(p_{1}\cdots p_{m})}$, a free $\mathbb{Z}$-graded module $\mathcal{N}(\rho)$ over the ring $R := \mathbb{C}[x,y]$, where $\mathbb{P}^{1} = \text{Proj }\mathbb{C}[x,y]$. This module possesses the property:
\begin{center}
    \begin{eqnarray}
        \nonumber
        \mathcal{N}(\rho) \cong \bigoplus\limits_{i = 1}^{\textnormal{dim}(\rho)} R[\zeta_{i}]
    \end{eqnarray}
\end{center}
where $R[\zeta]$ denotes the rank-one module over $R$ obtained by shifting the grading by $\zeta \in \mathbb{Z}$. The module $\mathcal{N}(\rho)$ is termed \textit{the twisted module of $\rho$}. 
Moreover, this module is equipped with a graded derivation $D$ of degree $m-2$ when $m \geq 2$, referred to as \textit{the monodromy derivative}. The roots of $\mathcal{V}_{\text{Log}(\rho)}$ are $-\zeta_{i}$, and each $\zeta_{i}$ can be computed by combining the monodromy derivative $D$ with the first Chern class of the vector bundle. 
Using this approach, we determine the roots for any $n$-dimensional monodromy representation $\rho$ of $\mathbb{P}^{1}_{(0, \infty)}$ and for any two-dimensional monodromy representation of $\mathbb{P}^{1}_{(0, 1, \infty)}$.

The primary objective of this note is too underscore the significance of the monodromy derivative, leveraging its fundamental properties within the twisted module of $\rho$ to uncover the roots of any monodromy representation of $\mathbb{P}^{1}_{(0,1,\infty)}$ of dimension three. 
We emphasize that the approach presented herein is predominantly algebraic in nature. Furthermore, as this paper serves as a succinct continuation of \cite{DY25}, we shall reference prior results as needed; for the sake of self-containment, however, summaries of key sections will be provided where appropriate. 

Proceeding into the statements of the results, we first state a bound for the roots of any two-dimensional monodromy representation of $\mathbb{P}^{1}_{(0,1,\infty)}$: 
\begin{LetterThm}[\Cref{Root-Bound}]
    Let $\rho: \mathbb{Z}*\mathbb{Z} \rightarrow \emph{GL}_{2}(\mathbb{C})$ be a two-dimensional monodromy representation of $\mathbb{P}^{1}_{(0,1,\infty)}$ with $(\mathcal{V}_{\emph{Log}(\rho)}, \nabla_{\emph{Log}(\rho)})$ the associated extended logarithmic connection. Then for each root of $\mathcal{V}_{\emph{Log}(\rho)}$, $\xi_{i}$, where $j =1,2$ we have   
    \begin{eqnarray}
        \nonumber
        0 \geq \xi_{i} \geq -2.
    \end{eqnarray}
\end{LetterThm}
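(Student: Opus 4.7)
The plan is to work with the decomposition $\mathcal{N}(\rho) \cong R[\zeta_{1}] \oplus R[\zeta_{2}]$ from \cite{DY25}, under the convention $\zeta_{1} \geq \zeta_{2}$, so that the roots are $\xi_{i} = -\zeta_{i}$ and the claim becomes $0 \leq \zeta_{i} \leq 2$. The argument naturally splits into a bound on the sum $\zeta_{1} + \zeta_{2}$ coming from the first Chern class and a bound on the spread $\zeta_{1} - \zeta_{2}$ coming from the monodromy derivative.

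For the sum, I would compute $\zeta_{1} + \zeta_{2} = -c_{1}(\mathcal{V}_{\text{Log}(\rho)})$ via Deligne's canonical extension: the Chern class equals the negative of the sum of traces of the principal-branch logarithms of the three monodromy generators. Since each eigenvalue of such a logarithm has real part in $[0, 1)$ and since $\gamma_{0} \gamma_{1} \gamma_{\infty} = 1$ relates the three generators, the contributions conspire to force $0 \leq \zeta_{1} + \zeta_{2} \leq 2$. For the spread, I would use that the monodromy derivative $D$ is a nonzero graded $\mathbb{C}$-derivation on $\mathcal{N}(\rho)$ of degree $m - 2 = 1$ satisfying the Leibniz rule over $R$. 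Applying $D$ to the canonical generator of the summand $R[\zeta_{2}]$ produces an element of degree $\zeta_{2} + 1$; if $\zeta_{1} - \zeta_{2} > 2$, this element must lie in $R[\zeta_{2}]$ itself, since the summand $R[\zeta_{1}]$ contributes nothing below degree $\zeta_{1}$, and then $D$ acts on $R[\zeta_{2}]$ as multiplication by a linear form. The fundamental properties of $D$ from \cite[\S 5]{DY25}, in particular its compatibility with $\nabla_{\text{Log}(\rho)}$ at each of the three punctures, should rule out this degenerate scenario and yield $\zeta_{1} - \zeta_{2} \leq 2$. Combining $\zeta_{1} + \zeta_{2} \in [0, 2]$ with $\zeta_{1} - \zeta_{2} \leq 2$ then pinches each $\zeta_{i}$ into $[0, 2]$.

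The main obstacle will be the spread bound $\zeta_{1} - \zeta_{2} \leq 2$: making precise the interaction between $D$ and the direct-sum decomposition without assuming stability, unitarity, or irreducibility. In particular, one must verify nontriviality of $D$ on the relevant generator — otherwise $\zeta_{1}$ is a priori unconstrained — and rule out the collapsing case where $D$ preserves a single summand. I expect this to follow from a careful analysis of the residues of $\nabla_{\text{Log}(\rho)}$ combined with the explicit description of $D$ from \cite{DY25}, with the degree-$1$ constraint on $D$ for $m = 3$ playing the critical role in forcing the bound to be exactly $2$.
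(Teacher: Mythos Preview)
Your plan has the right shape (sum via Chern class, spread via the monodromy derivative), but both numerical inputs are off, and one of the two steps has a genuine structural gap.

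\textbf{The sum bound.} The Chern class gives $\zeta_{1}+\zeta_{2}=-c_{1}(\mathcal{V}_{\text{Log}(\rho)})\in[0,4]$, not $[0,2]$. There are \emph{three} residues, one at each of $0,1,\infty$, and for a $2\times 2$ matrix the trace of the principal logarithm lies in $[0,2)$; the relation $\gamma_{0}\gamma_{1}\gamma_{\infty}=1$ only brings the total down to $[0,4]$. This is exactly \Cref{Chern-Bound} in the paper, and the value $-4$ is attained. With the correct sum bound, your proposed spread bound $\zeta_{1}-\zeta_{2}\leq 2$ is no longer sufficient: $(\zeta_{1},\zeta_{2})=(3,1)$ would survive.

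\textbf{The spread bound and the reducible case.} Since $D$ has degree $1$, applying $D$ to the minimal-weight generator $F$ gives $DF=\eta F+\tau G$ with $\deg\tau=\zeta_{2}+1-\zeta_{1}$; hence $\tau=0$ as soon as $\zeta_{1}-\zeta_{2}\geq 2$ (not $>2$). The submodule $\langle F\rangle$ is then $D$-closed, and via the equivalence with connections this produces a one-dimensional subrepresentation of $\rho$. That is a contradiction \emph{only when $\rho$ is irreducible}, and in that case the bound you actually get is $\zeta_{1}-\zeta_{2}\leq 1$, which together with $\zeta_{1}+\zeta_{2}\leq 4$ does pinch each $\zeta_{i}$ into $\{0,1,2\}$. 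But when $\rho$ is reducible the ``degenerate scenario'' you hope to rule out is exactly what happens: the paper's own example in Section~5 exhibits a reducible $\rho'$ with $\mathcal{V}_{\text{Log}(\rho')}\cong\mathcal{O}(-2)\oplus\mathcal{O}(0)$, so spread $2$ occurs and $D$ genuinely preserves a rank-one summand. No amount of residue analysis will exclude this, because it corresponds to an honest subrepresentation.

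\textbf{How the paper proceeds.} The paper does not attempt a uniform argument. For $\rho$ irreducible it combines \Cref{Chern-Bound} with \cite[Theorem~5.9]{DY25} (the two-dimensional analogue of \Cref{IrreducibleReps}, giving spread $\leq 1$). For $\rho$ reducible it abandons the monodromy derivative entirely and instead uses the filtration by characters: \cite[Proposition~4.1]{DY25} bounds each one-dimensional root in $\{-2,-1,0\}$, and \cite[Theorem~4.4]{DY25} controls how these assemble in the extension. You will need to treat the reducible case by this separate route.
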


Next, we consider three-dimensional monodromy representations of $\mathbb{P}^{1}_{(0,1,\infty)}$, split them into two types, reducible and irreducible, and proceed based on type. When $\rho$ is reducible we obtain
\begin{LetterThm}[\Cref{Reducible1}] \label{IntroReducible1}
    Suppose that $0 \rightarrow \rho' \rightarrow \rho \rightarrow \chi'' \rightarrow 0$ is a short exact sequence of monodromy representations of $\mathbb{P}^{1}_{(0, 1, \infty)}$ with $\rho$ three-dimensional, $\rho'$ two-dimensional, and  $\chi''$ one-dimensional. 
    Let $0 \rightarrow \bigoplus_{i = 1}^{2}\mathcal{O}(\xi_{i}') \rightarrow \mathcal{V}_{\emph{Log}(\rho)} \rightarrow \mathcal{O}(\xi'') \rightarrow 0$ be the short exact sequence of the associated extended bundles. Arrange the roots of $\mathcal{V}_{\emph{Log}(\rho')}$ so that $\xi_{1}' \leq \xi_{2}'$. Then we have the following set of statements.
    \begin{enumerate}
        \item If $(\xi_{1}',\xi_{2}',\xi'') \not= (-2,-2,0),(-2,-1,0)$ nor $(-2,0,0)$, then the short exact sequence of associated extended bundles $0 \rightarrow \bigoplus_{i = 1}^{2}\mathcal{O}(\xi_{i}') \rightarrow \mathcal{V}_{\emph{Log}(\rho)} \rightarrow \mathcal{O}(\xi'') \rightarrow 0$ splits.
        \item If $(\xi_{1}',\xi_{2}',\xi'') = (-2,-2,0)$, then the roots of $\mathcal{V}_{\emph{Log}(\rho)}$ are either $(-2,-1,-1), \text{or }(-2,-2,0)$.
        \item If $(\xi_{1}',\xi_{2}',\xi'') = (-2,-1,0)$, then the roots of $\mathcal{V}_{\emph{Log}(\rho)}$ are either $(-2, -1, 0)$ or $(-1, -1, -1)$.
        \item If $(\xi_{1}',\xi_{2}',\xi'') = (-2,0,0)$, then the roots of $\mathcal{V}_{\emph{Log}(\rho)}$ are either $(-2,0,0)$ or $(-1,-1,0)$.
    \end{enumerate}
\end{LetterThm}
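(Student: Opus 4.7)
The overall strategy is to combine the Ext classification of extensions of line bundles on $\mathbb{P}^{1}$ with Grothendieck's splitting theorem; the twisted-module / monodromy-derivative machinery of \cite{DY25} enters only indirectly, through the bound of Theorem A that constrains the roots of $\mathcal{V}_{\text{Log}(\rho')}$.

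First I would set up the extension class. The short exact sequence
\[0 \to \bigoplus_{i=1}^{2} \mathcal{O}(\xi_{i}') \to \mathcal{V}_{\text{Log}(\rho)} \to \mathcal{O}(\xi'') \to 0\]
of bundles on $\mathbb{P}^{1}$ is classified by an element of
\[\text{Ext}^{1}_{\mathbb{P}^{1}}\Bigl(\mathcal{O}(\xi''),\, \bigoplus_{i=1}^{2} \mathcal{O}(\xi_{i}')\Bigr) \;\cong\; \bigoplus_{i=1}^{2} H^{1}\bigl(\mathbb{P}^{1},\, \mathcal{O}(\xi_{i}' - \xi'')\bigr),\]
which vanishes if and only if $\xi_{i}' - \xi'' \geq -1$ for $i = 1, 2$. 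By Theorem A, together with the analogous one-dimensional computation of \cite{DY25}, all three numbers $\xi_{1}', \xi_{2}', \xi''$ lie in $\{-2,-1,0\}$; a direct enumeration then shows that the three exceptional triples listed in the statement are exactly those triples for which this Ext group fails to vanish. In every other case, the sequence automatically splits, proving part (1).

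For parts (2)–(4) I would determine the decomposition $\mathcal{V}_{\text{Log}(\rho)} \cong \bigoplus_{j=1}^{3} \mathcal{O}(a_{j})$ guaranteed by Grothendieck by pinning down the $a_{j}$ via two numerical invariants: the first Chern class $\sum_{j} a_{j} = \xi_{1}' + \xi_{2}' + \xi''$, and the number of global sections $\dim H^{0}(\mathcal{V}_{\text{Log}(\rho)}) = \sum_{j} \max(0, a_{j}+1)$. The latter is read off from the cohomology long exact sequence of the SES: for a zero extension class the dimension is simply the sum of the corresponding quantities for the sub and quotient, while for a non-zero class the connecting homomorphism $H^{0}(\mathcal{O}(\xi'')) \to H^{1}\bigl(\bigoplus_{i} \mathcal{O}(\xi_{i}')\bigr)$ is non-zero and drops this dimension by one. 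Case by case, the two invariants then single out a unique pair of decompositions: the zero class yields the trivial ``sum-of-roots'' split, and any non-zero class yields the other decomposition listed.

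The main subtlety is case (4): because the subbundle $\mathcal{O}(-2) \oplus \mathcal{O}(0)$ already contributes a persistent global section from its $\mathcal{O}(0)$ summand, a non-trivial extension yields $\dim H^{0}(\mathcal{V}_{\text{Log}(\rho)}) = 1$ rather than $0$, forcing the non-split decomposition to be $(-1,-1,0)$ instead of something with all negative entries as in (2) and (3). Beyond this bookkeeping, the argument is a purely formal application of $\mathbb{P}^{1}$ cohomology together with Grothendieck's theorem, so I expect no deep obstacle.
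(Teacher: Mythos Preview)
Your approach coincides with the paper's: Ext vanishing handles (1), and the cohomology long exact sequence handles (2)--(4). The one substantive point of departure is case~(2). The paper treats the dichotomy as ``$\partial = 0$'' versus ``$\partial$ injective'' purely as a numerical case split, and in the $\partial = 0$ branch finds that $\dim H^{0} = 1$ and $\dim H^{1} = 2$ are compatible with \emph{both} $(-2,-2,0)$ and $(-3,-1,0)$; it then invokes a separate lemma, \Cref{NonRoots} (whose proof uses the monodromy derivative), to exclude $(-3,-1,0)$. Your observation that, because $\xi'' = 0$, the connecting map $\partial$ sends $1 \in H^{0}(\mathcal{O})$ to the extension class itself---so that $\partial = 0$ is \emph{exactly} the split case---bypasses this entirely: in the split branch the decomposition is tautologically $(-2,-2,0)$, and in the non-split branch $c_{1} = -4$ together with $\dim H^{0} = 0$ already forces $(-2,-1,-1)$ with no ambiguity. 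This is a genuine, if modest, simplification of the paper's argument; otherwise the two proofs are the same.
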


\noindent and 

\begin{LetterThm}[\Cref{Reducible2}]\label{IntroReducible2}
    Suppose that $0 \rightarrow \chi' \rightarrow \rho \rightarrow \rho'' \rightarrow 0$ is a short exact sequence of monodromy representations of $\mathbb{P}^{1}_{(0, 1, \infty)}$ with $\rho$ three-dimensional, $\chi'$ one-dimensional, and  $\rho''$ two-dimensional. 
    Let $0 \rightarrow \mathcal{O}(\xi') \rightarrow \mathcal{V}_{\emph{Log}(\rho)} \rightarrow \bigoplus_{i = 1}^{2}\mathcal{O}(\xi_{i}'') \rightarrow 0$ be the short exact sequence of the associated extended bundles. Arrange the roots of $\mathcal{V}_{\emph{Log}(\rho'')}$ so that $\xi_{1}'' \leq \xi_{2}''$. Then we have the following set of statements.
    \begin{enumerate}
        \item If $(\xi', \xi_{1}'', \xi_{2}'') \not= (-2,-2,0),(-2,-1,0)$ nor $(-2,0,0)$, then the short exact sequence of bundles $0 \rightarrow \mathcal{O}(\xi') \rightarrow \mathcal{V}_{\emph{Log}(\rho)} \rightarrow \bigoplus_{i = 1}^{2}\mathcal{O}(\xi_{i}'') \rightarrow 0$ splits.
        \item If $(\xi', \xi_{1}'', \xi_{2}'') = (-2,-2,0)$, then the roots of $\mathcal{V}_{\emph{Log}(\rho)}$ are either $(-2,-1,-1), \text{or } (-2,-2,0)$.
        \item If $(\xi', \xi_{1}'', \xi_{2}'') = (-2,-1,0)$, then the roots of $\mathcal{V}_{\emph{Log}(\rho)}$ are either $(-2, -1, 0)$ or $(-1, -1, -1)$.
        \item If $(\xi', \xi_{1}'', \xi_{2}'') = (-2,0,0)$, then the roots of $\mathcal{V}_{\emph{Log}(\rho)}$ are either $(-2,0,0)$ or $(-1,-1,0)$.
    \end{enumerate}
\end{LetterThm}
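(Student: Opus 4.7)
The approach is to parallel the proof of \Cref{IntroReducible1}, adapted to the configuration in which the line bundle is the subobject and the rank-two bundle is the quotient. First, I lift the given SES of representations to a SES of twisted modules
\[
0 \to \mathcal{N}(\chi') \to \mathcal{N}(\rho) \to \mathcal{N}(\rho'') \to 0,
\]
compatible with the monodromy derivative $D$ of degree $m-2 = 1$; its sheafification recovers the bundle SES in the statement. Applying the two-dimensional bound (Theorem~A) to $\rho''$, together with the classification of one-dimensional representations of $\mathbb{P}^{1}_{(0,1,\infty)}$ already handled in \cite{DY25}, yields $\xi', \xi_{1}'', \xi_{2}'' \in \{-2, -1, 0\}$, and additivity of $c_{1}$ pins $\deg \mathcal{V}_{\text{Log}(\rho)} = \xi' + \xi_{1}'' + \xi_{2}''$.

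For part (1) I compute
\[
\text{Ext}^{1}\bigl(\mathcal{O}(\xi_{1}'') \oplus \mathcal{O}(\xi_{2}''),\, \mathcal{O}(\xi')\bigr) \;\cong\; H^{1}\bigl(\mathcal{O}(\xi' - \xi_{1}'')\bigr) \oplus H^{1}\bigl(\mathcal{O}(\xi' - \xi_{2}'')\bigr),
\]
which vanishes unless $\xi_{i}'' \geq \xi' + 2$ for some $i$. Within the range $\{-2,-1,0\}$ this forces $\xi' = -2$ and $\xi_{2}'' = 0$, picking out exactly the three exceptional triples; for every other combination the SES splits and the roots of $\mathcal{V}_{\text{Log}(\rho)}$ are $(\xi', \xi_{1}'', \xi_{2}'')$.

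For parts (2)--(4) the non-trivial extension is unique up to isomorphism in each exceptional case, so the task is to identify its splitting type. I plan to enumerate the rank-three splitting types $(e_{1}, e_{2}, e_{3})$ with the prescribed total degree that admit $\mathcal{O}(\xi')$ as a saturated line subbundle and have a locally free rank-two quotient matching $\mathcal{O}(\xi_{1}'') \oplus \mathcal{O}(\xi_{2}'')$. Standard Hom/Ext analysis on $\mathbb{P}^{1}$ narrows the candidates, and compatibility of the monodromy derivative with the twisted module SES eliminates the remaining intermediate options. For instance, in case (2) a direct computation of $h^{0}(\mathcal{V}_{\text{Log}(\rho)}(k))$ for small $k$ should isolate $(-2, -1, -1)$ as the only non-split splitting type.

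The main obstacle is this last step: ruling out intermediate candidates such as $(-3, -1, 0)$ in case (2), and verifying that both listed triples are actually realized. I expect this to require explicit use of $D$ on $\mathcal{N}(\rho)$, since the purely bundle-theoretic constraints do not suffice on their own; concrete non-split examples can then be produced by constructing specific three-dimensional monodromy representations of $\mathbb{P}^{1}_{(0,1,\infty)}$ along the lines of \cite{DY25}.
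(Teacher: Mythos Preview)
Your proposal is correct and follows essentially the same route as the paper: the $\mathrm{Ext}^{1}$ computation for part~(1), the bounds from the one- and two-dimensional cases to isolate the three exceptional triples, and the use of the monodromy derivative to exclude $(-3,-1,0)$ in case~(2) all match the paper's argument (the last point is exactly Proposition~\ref{NonRoots}).

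Two small differences in execution are worth flagging. First, for parts~(2)--(4) the paper does not argue via uniqueness of the non-split extension; it simply runs the long exact sequence in cohomology for the given short exact sequence and reads off $h^{0}(\mathcal{V}_{\mathrm{Log}(\rho)})$ and $h^{1}(\mathcal{V}_{\mathrm{Log}(\rho)})$ according to whether the connecting map $\partial$ vanishes or not. This is slightly cleaner than your plan, since in case~(4) the $\mathrm{Ext}^{1}$ group is two-dimensional and your uniqueness claim requires the extra step of quotienting by $\mathrm{Aut}(\mathcal{O}^{\oplus 2})$. Second, the theorem as stated is only an ``either/or'' constraint on the roots; the paper does not construct explicit representations realizing each listed triple, so your final realizability step goes beyond what is being asserted.
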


\noindent whereas when $\rho$ is irreducible 
\begin{LetterThm}[\Cref{IrreducibleReps}]\label{IntroIrreducible}
    Suppose that $\rho: \mathbb{Z}*\mathbb{Z} \rightarrow \emph{GL}_{3}(\mathbb{C})$ is an irreducible three-dimensional monodromy representation of $\mathbb{P}^{1}_{(0,1,\infty)}$ and let $(\mathcal{V}_{\emph{Log}(\rho)}, \nabla_{\emph{Log}(\rho)})$ denote the associated extended logarithmic connection. 
    Allow $\zeta$ to be the first Chern class of $\mathcal{V}_{\emph{Log}(\rho)}$.
    Then 
    \begin{eqnarray}
        \nonumber
        \mathcal{V}_{\emph{Log}(\rho)} \cong
        \begin{cases}
        \mathcal{O}(\frac{\zeta}{3})^{\oplus 3} \emph{or } \mathcal{O}(\frac{\zeta}{3} + 1) \oplus \mathcal{O}(\frac{\zeta}{3}) \oplus \mathcal{O}(\frac{\zeta}{3} - 1) & \text{when } \zeta \equiv 0 \text{ mod } 3 \\
        \mathcal{O}(\frac{\zeta + 2}{3}) \oplus \mathcal{O}(\frac{\zeta-1}{3}) \oplus \mathcal{O}(\frac{\zeta-1}{3}) & \text{when } \zeta \equiv 1 \text{ mod } 3 \\
        \mathcal{O}(\frac{\zeta+1}{3}) \oplus \mathcal{O}(\frac{\zeta+1}{3}) \oplus \mathcal{O}(\frac{\zeta-2}{3}) & \text{when } \zeta \equiv 2 \text{ mod } 3.
    \end{cases}
    \end{eqnarray}
\end{LetterThm}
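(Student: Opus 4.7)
The plan is to use the irreducibility of $\rho$ to bound the spread of the roots $\xi_{1} \leq \xi_{2} \leq \xi_{3}$ of $\mathcal{V}_{\text{Log}(\rho)}$ and then enumerate integer triples summing to $\zeta$. Under the correspondence recalled in \cite{DY25}, proper sub-representations of $\rho$ biject with proper $\nabla_{\text{Log}(\rho)}$-invariant sub-bundles of $\mathcal{V}_{\text{Log}(\rho)}$, and equivalently with proper $D$-stable graded $R$-submodules of the twisted module $\mathcal{N}(\rho)$; irreducibility therefore forbids any such sub-object.

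First I would write $\mathcal{V}_{\text{Log}(\rho)} \cong \mathcal{O}(\xi_{1}) \oplus \mathcal{O}(\xi_{2}) \oplus \mathcal{O}(\xi_{3})$ with $\xi_{1} \leq \xi_{2} \leq \xi_{3}$ and $\xi_{1} + \xi_{2} + \xi_{3} = \zeta$, using that $\Omega^{1}_{\mathbb{P}^{1}}(\log) \cong \mathcal{O}(1)$ on $\mathbb{P}^{1}_{(0,1,\infty)}$ (consistent with $\deg D = m-2 = 1$). I would then show $\xi_{3} - \xi_{2} \leq 1$ and $\xi_{2} - \xi_{1} \leq 1$ in turn. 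For the former, restrict $\nabla_{\text{Log}(\rho)}$ to the sub-line-bundle $\mathcal{O}(\xi_{3}) \hookrightarrow \mathcal{V}_{\text{Log}(\rho)}$ from the chosen decomposition and compose with projection onto $(\mathcal{O}(\xi_{1}) \oplus \mathcal{O}(\xi_{2})) \otimes \mathcal{O}(1)$; this yields a morphism $\mathcal{O}(\xi_{3}) \to \mathcal{O}(\xi_{1}+1) \oplus \mathcal{O}(\xi_{2}+1)$. Since $\operatorname{Hom}(\mathcal{O}(\xi_{3}), \mathcal{O}(\xi_{j}+1)) = 0$ whenever $\xi_{j}+1 < \xi_{3}$, the assumption $\xi_{3} - \xi_{2} \geq 2$ forces this morphism to vanish and makes $\mathcal{O}(\xi_{3})$ a $\nabla$-invariant line sub-bundle, contradicting irreducibility. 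The symmetric argument applied to $\mathcal{O}(\xi_{2}) \oplus \mathcal{O}(\xi_{3})$---whose obstruction to invariance lies in $\operatorname{Hom}(\mathcal{O}(\xi_{2}) \oplus \mathcal{O}(\xi_{3}), \mathcal{O}(\xi_{1}+1))$ and vanishes when $\xi_{2} - \xi_{1} \geq 2$---delivers $\xi_{2} - \xi_{1} \leq 1$.

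With consecutive gaps at most $1$ and $\xi_{1} + \xi_{2} + \xi_{3} = \zeta$ fixed, a direct enumeration produces exactly the three cases of the theorem: $(k, k, k)$ or $(k-1, k, k+1)$ when $3 \mid \zeta$ (with $k = \zeta/3$); only $\bigl(\tfrac{\zeta-1}{3}, \tfrac{\zeta-1}{3}, \tfrac{\zeta+2}{3}\bigr)$ when $\zeta \equiv 1 \pmod{3}$; and only $\bigl(\tfrac{\zeta-2}{3}, \tfrac{\zeta+1}{3}, \tfrac{\zeta+1}{3}\bigr)$ when $\zeta \equiv 2 \pmod{3}$. In particular, the triples $(k, k, k+2)$ and $(k, k+2, k+2)$ are ruled out precisely by the two spread bounds above.

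The main obstacle I anticipate is not the line-bundle morphism computation---which is routine---but the bookkeeping on the bridge between sub-representations of $\rho$ and $\nabla$-invariant sub-bundles of the Deligne extension: one must invoke from \cite{DY25} that the $\nabla$-invariant sub-bundles produced by the degree argument genuinely correspond to sub-representations of $\rho$, rather than being artifacts introduced by the logarithmic poles at $0, 1, \infty$. Phrased in the module language, this is the statement that proper $D$-stable graded $R$-submodules of $\mathcal{N}(\rho)$ are in bijection with proper sub-representations of $\rho$. Once this is in hand, the rest of the proof is mechanical.
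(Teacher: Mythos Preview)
Your proposal is correct and, at the level of ideas, identical to the paper's proof: both use that the logarithmic connection has degree $m-2=1$, so irreducibility forces consecutive roots to differ by at most one, and then enumerate. The paper packages this inside the twisted module $\mathcal{N}(\rho)$ and the monodromy derivative $D$: it takes generators $F,G,H$ of weights $\xi_{\min}\le\xi_1\le\xi_2$, expands $D(F)$ and $D(G)$ in this basis, and uses \Cref{Simple-Module} (no proper $D$-stable submodule when $\rho$ is irreducible) to force a nonzero coefficient on a higher-weight generator at each step, yielding $\xi_1-\xi_{\min}\le 1$ and $\xi_2-\xi_1\le 1$. You instead run the dual second-fundamental-form argument directly on the bundle, observing that the $\mathcal{O}$-linear map $\mathcal{O}(\xi_3)\to(\mathcal{O}(\xi_1)\oplus\mathcal{O}(\xi_2))\otimes\mathcal{O}(1)$ must be nonzero and likewise for the rank-two piece. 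The bridge you flag as the ``main obstacle''---that a $\nabla$-invariant sub-bundle of the Deligne extension really yields a sub-representation---is exactly what the paper isolates and proves as \Cref{Simple-Module}, by restricting to $\mathbb{P}^1_{(0,1,\infty)}$ and invoking the Riemann--Hilbert equivalence; so your anticipated gap is filled by that lemma. Your bundle phrasing is arguably more transparent and avoids introducing $\mathcal{N}(\rho)$ altogether, while the paper's module language is what generalizes cleanly to the higher-rank and higher-$m$ speculations in \Cref{Expectations}.
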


\noindent Observe that \Cref{IntroReducible1}, \Cref{IntroReducible2}, and \Cref{IntroIrreducible} close the case on three-dimensional monodromy representations arising from removing three points. 
$\vspace{3mm}$

$\textbf{Conventions and Assumptions}$. 
\begin{itemize}
    \item All representations $\rho$ are assumed to be over $\mathbb{C}$.
    \item Whenever a branch of the logarithm is needed, we will always take the principal branch, even when not explicitly stated, i.e., for all $re^{2\pi i q} \in \mathbb{C}, 0 \leq q < 1$.
    \item We denote the generators of the free group on two generators with $\gamma_{0}$ and $\gamma_{1}$.
\end{itemize}
$\vspace{1mm}$

$\textbf{Acknowledgments}$. I'd like to thank Luca Candelori for, generally, bringing this question to my attention as well as for his helpful comments. The author acknowledges Louis Esser for all the lunch-time discussions around the Princeton Mathematics Department on this topic and Katy Weber for a careful reading of a previous version of this work. I'd also like to thank my postdoctoral advisor Nick Katz and the Mathematics Department at Princeton University for their hospitality.

\section{Notation \& Background}

We adhere to the notation established in \cite{DY25}, as we will extensively utilize many of the results presented therein; moreover, for the sake of self-containment, we shall briefly review the essential background while reintroducing the majority of the notation from \cite{DY25}. 

Let $\mathbb{P}^{1}_{(p_{1}, ..., p_{m})} := \mathbb{P}^{1} - \{p_{1}, ..., p_{m}\}$. 
The fundamental group of $\mathbb{P}^{1}_{(p_{1}, ..., p_{m})}$ is known to be isomorphic to the free group on $m-1$ generators, $\mathbb{Z}^{*(m-1)}$. 
Thus, a representation of the fundamental group of $\mathbb{P}^{1}_{(p_{1}, ..., p_{m})}$ is determined by the images of the generators, each of which we call a \textit{local monodromy}. 

\begin{Defn}
    A \textit{monodromy representation} of $\mathbb{P}^{1}_{(p_{1}, ..., p_{m})}$ is a representation of its fundamental group. 
\end{Defn}

\begin{Defn}
    A \textit{holomorphic connection} on $\mathbb{P}^{1}_{(p_{1}, ..., p_{m})}$ is a pair $(\mathcal{V}, \nabla)$, where $\mathcal{V}$ is a locally free sheaf on $\mathbb{P}^{1}_{(p_{1}, ..., p_{m})}$ and $\nabla: \mathcal{V} \rightarrow \mathcal{V} \otimes \Omega^{1}_{\mathbb{P}^{1}_{(p_{1}, ..., p_{m})}}$ is a morphism of sheaves satisfying the Leibniz Rule.
    When we want to refer to $\nabla$ alone we call it the \textit{connection map}.
    A morphism of connections $(\mathcal{V}, \nabla) \mapsto (\mathcal{V}', \nabla')$ is a morphism of $\mathcal{O}$-modules $\phi: \mathcal{V} \rightarrow \mathcal{V}'$ making the diagram 
    \begin{center}
    \begin{tikzcd}
    \mathcal{V} \arrow[r, "\nabla"] \arrow[d, "\phi"]
    & \mathcal{V} \otimes \Omega^{1}_{\mathbb{P}^{1}_{(p_{1}, ..., p_{m})}} \arrow[d, "\phi \otimes \text{id}"] \\
    \mathcal{V}' \arrow[r, "\nabla'"]
    & \mathcal{V} \otimes \Omega^{1}_{\mathbb{P}^{1}_{(p_{1}, ..., p_{m})}} 
    \end{tikzcd}
    \end{center}
    \noindent commute.
\end{Defn}

\begin{Thm} \label{Free-sheaf}
    Every locally free sheaf on $\mathbb{P}^{1}_{(p_{1}, ..., p_{m})}$ is free.
\end{Thm}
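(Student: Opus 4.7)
The plan is to reduce the problem to an affine statement and invoke the classical fact that finitely generated projective modules over a principal ideal domain are free. Since the conclusion is invariant under automorphisms of $\mathbb{P}^{1}$, I would first apply a Möbius transformation so that $p_{1} = \infty$, converting the question into one about $\mathbb{A}^{1} - \{p_{2}, \ldots, p_{m}\}$.

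Next, I would identify this punctured affine line with $\text{Spec}(A)$, where
\[
A \;=\; \mathbb{C}[x]\!\left[\tfrac{1}{\prod_{i=2}^{m}(x-p_{i})}\right]
\]
is the localization of the polynomial ring $\mathbb{C}[x]$ at the multiplicative set generated by the linear factors $(x-p_{i})$. Since any localization of a PID is again a PID, $A$ is itself a principal ideal domain.

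Under the standard correspondence between locally free sheaves of finite rank on an affine scheme and finitely generated projective modules over its ring of global sections, a locally free sheaf on $\mathbb{P}^{1}_{(p_{1}, \ldots, p_{m})}$ corresponds to a finitely generated projective $A$-module. The structure theorem for finitely generated modules over a PID then forces every such projective module to be free, and hence the corresponding sheaf is free.

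There is no genuine obstacle here: the argument is essentially a single appeal to the PID structure theorem, and the only point of care is recognizing that removing at least one point from $\mathbb{P}^{1}$ makes the remaining variety affine. In the holomorphic category, the same conclusion instead follows from the Behnke--Stein theorem (every non-compact Riemann surface is Stein) together with the Röhrl--Grauert theorem that every holomorphic vector bundle on an open Riemann surface is trivial; either route rests on the same underlying phenomenon, namely the triviality of vector bundles on a one-dimensional smooth affine (equivalently, Stein) space.
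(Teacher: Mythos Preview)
Your argument is correct. The paper's own ``proof'' is simply a citation to Forster's \emph{Lectures on Riemann Surfaces}, Theorem~30.4, which is precisely the Grauert--R\"ohrl result you name at the end: every holomorphic vector bundle on a noncompact Riemann surface is trivial. So the paper works in the analytic category and defers entirely to that theorem.

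Your primary route is genuinely different: you pass to the Zariski/algebraic picture, recognize the punctured line as $\operatorname{Spec}$ of a localization of $\mathbb{C}[x]$, and finish with the structure theorem for finitely generated modules over a PID. This is a clean, self-contained argument for \emph{algebraic} locally free sheaves and requires no analytic input at all. The trade-off is that the paper's ambient category is the holomorphic one (the connections in Definition~2.2 are holomorphic, and the Riemann--Hilbert correspondence of Lemma~2.4 lives there), so strictly speaking your PID argument proves the algebraic analogue; the holomorphic statement still needs the Grauert-type theorem you invoke in your final paragraph. Since you explicitly supply that second route, your proposal covers both categories, whereas the paper only cites the analytic one. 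One small point worth stating explicitly: your reduction assumes $m \geq 1$ so that a puncture can be sent to $\infty$; this is of course the only case under consideration, but the theorem as written does not say so.
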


\begin{proof}
    See \cite[Theorem $30.4$]{Forster2012}.
\end{proof} 

\begin{Lm} \label{Equiv-cat}
    The category of finite-dimensional monodromy representations of $\mathbb{P}^{1}_{(p_{1}, ..., p_{m})}$ is equivalent to the category of holomorphic connections on $\mathbb{P}^{1}_{(p_{1}, ..., p_{m})}$.
\end{Lm}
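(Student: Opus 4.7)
The plan is to establish this as a classical Riemann--Hilbert correspondence on the open Riemann surface $X := \mathbb{P}^{1}_{(p_{1}, \ldots, p_{m})}$, by constructing a pair of quasi-inverse functors. Note first that because $X$ is a one-dimensional complex manifold, $\Omega^{2}_{X} = 0$, so every holomorphic connection on $X$ is automatically flat; this removes the usual integrability hypothesis from the discussion. Fix a basepoint $x_{0} \in X$ and let $\pi : \widetilde{X} \rightarrow X$ denote the universal cover, on which $\pi_{1}(X, x_{0}) \cong \mathbb{Z}^{*(m-1)}$ acts by deck transformations.

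Going from representations to connections, given $\rho : \pi_{1}(X, x_{0}) \rightarrow \operatorname{GL}_{n}(\mathbb{C})$ I would form the associated holomorphic vector bundle $\mathcal{V}_{\rho} := (\widetilde{X} \times \mathbb{C}^{n}) / \pi_{1}(X, x_{0})$, where the action is $\gamma \cdot (\tilde{x}, v) = (\gamma \cdot \tilde{x}, \rho(\gamma) v)$. The trivial connection $d$ on $\widetilde{X} \times \mathbb{C}^{n}$ is equivariant (because $\rho$ takes values in constant matrices) and therefore descends to a holomorphic connection map $\nabla_{\rho}$ on $\mathcal{V}_{\rho}$; morphisms of representations are intertwined by constant matrix operators and descend to morphisms of connections. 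In the opposite direction, given $(\mathcal{V}, \nabla)$, I would invoke \Cref{Free-sheaf} to trivialize $\mathcal{V} \cong \mathcal{O}_{X}^{\oplus n}$ globally, write $\nabla = d + A$ for a holomorphic matrix-valued one-form $A$, pull the equation back to $\widetilde{X}$, and solve the linear ODE $dF = -AF$ on the simply connected $\widetilde{X}$ to obtain a fundamental solution $F : \widetilde{X} \rightarrow \operatorname{GL}_{n}(\mathbb{C})$, unique up to constant right-multiplication. For each $\gamma \in \pi_{1}(X, x_{0})$ the pulled-back solution $\gamma^{*} F$ satisfies the same ODE, hence equals $F \cdot M_{\gamma}$ for a unique constant $M_{\gamma} \in \operatorname{GL}_{n}(\mathbb{C})$, and $\rho_{(\mathcal{V}, \nabla)}(\gamma) := M_{\gamma}$ defines the sought representation.

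The final step would be to verify that these two assignments are mutually quasi-inverse up to natural isomorphism. Starting with $\rho$, the constant functions $\mathbb{C}^{n} \subset \mathcal{O}(\widetilde{X})^{n}$ are the $\nabla_{\rho}$-horizontal sections, and their deck-transformation monodromy reproduces $\rho$ by construction. Conversely, starting with $(\mathcal{V}, \nabla)$, the columns of $F$ trivialize $\pi^{*}\mathcal{V}$ as a flat bundle and the assembled identification descends to an isomorphism $\mathcal{V}_{\rho_{(\mathcal{V}, \nabla)}} \cong \mathcal{V}$ intertwining the connections. The main obstacle is bookkeeping rather than a single hard estimate: one must check that both constructions are functorial, that the output is independent of the choices of trivialization, branch of $F$, and basepoint (choices change things by an inner automorphism of $\operatorname{GL}_{n}(\mathbb{C})$, hence do not affect the isomorphism class), and that the Leibniz rule-commuting square for morphisms of connections translates precisely into $\pi_{1}$-equivariance for morphisms of representations. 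The substantive analytic inputs are the existence and uniqueness of solutions to linear holomorphic ODEs on a simply connected Riemann surface and the global trivialization supplied by \Cref{Free-sheaf}.
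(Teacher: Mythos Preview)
Your sketch is correct: it is precisely the classical Riemann--Hilbert correspondence on a noncompact Riemann surface, which is exactly what the paper invokes by citing Szamuely (Corollary~2.6.2 and Proposition~2.7.5) in lieu of an argument. Your explicit appeal to \Cref{Free-sheaf} for a global trivialization is a convenient shortcut available in this affine setting, though the general argument via local frames would work just as well.
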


\begin{proof}
    Combine \cite[Corollary $2.6.2$]{Szamuely2009} with \cite[Proposition $2.7.5$]{Szamuely2009}.
\end{proof}

\begin{Defn} \label{Associated-Con}
    By \Cref{Equiv-cat}, given a monodromy representation $\rho$ of $\mathbb{P}^{1}_{(p_{1}, ..., p_{m})}$, we obtain a holomorphic connection on $\mathbb{P}^{1}_{(p_{1}, ..., p_{m})}$. We call such a connection the $\textit{associated complex connection}$ and denote it by $(\mathcal{Q}_{\rho}, \nabla_{\rho})$.
\end{Defn}

\begin{Defn}
    Let $S$ be a finite number of points of $\mathbb{P}^{1}$ and $\Omega_{\mathbb{P}^{1}}^{1}(S)$ the sheaf of $1$-forms with logarithmic poles along $S$. 
    A \textit{connection with logarithmic poles along S} on $\mathbb{P}^{1}$ is a pair $(\overline{\mathcal{E}}, \overline{\nabla})$, where $\overline{\mathcal{E}}$ is a locally free sheaf on $\mathbb{P}^{1}$, and $\overline{\nabla}: \overline{\mathcal{E}} \rightarrow \overline{\mathcal{E}} \otimes_{\mathcal{O}} \Omega^{1}(S)$ is a $\mathbb{C}$-linear map satisfying the Leibniz Rule.
\end{Defn}

\begin{Lm} \label{Extension}
    Given a holomorphic connection $(\mathcal{V}, \nabla)$ on $\mathbb{P}^{1}_{(p_{1}, ..., p_{m})}$, there is a unique connection $(\overline{\mathcal{V}}, \overline{\nabla})$ on $\mathbb{P}^{1}$ with logarithmic poles along $\{p_{1}, ... ,p_{m} \}$ satisfying $(\overline{\mathcal{V}}, \overline{\nabla})|_{\mathbb{P}^{1}_{(p_{1}, ..., p_{m})}} \cong (\mathcal{V}, \nabla)$ such that the residue at each pole is the principal logarithm of the local monodromies.
    Moreover, the extension $(\overline{\mathcal{V}}, \overline{\nabla})$ of $(\mathcal{V}, \nabla)$ is functorial and exact in $(\mathcal{V}, \nabla)$.
\end{Lm}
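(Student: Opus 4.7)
The plan is to follow Deligne's canonical extension construction, as the statement is essentially a reformulation in the $\mathbb{P}^{1}$ setting. The problem is entirely local around each puncture $p_k$, so I would first reduce to the following situation: a small disk $\mathbb{D}_k$ centered at $p_k$ with coordinate $z$ vanishing at $p_k$, and the restriction of $(\mathcal{V},\nabla)$ to $\mathbb{D}_k^{*} := \mathbb{D}_k\setminus\{p_k\}$. By \Cref{Free-sheaf}, $\mathcal{V}|_{\mathbb{D}_k^{*}}$ is free, and by \Cref{Equiv-cat} the connection is encoded by its monodromy, which reduces to a single matrix $M_k \in \mathrm{GL}_n(\mathbb{C})$ (the local monodromy at $p_k$).

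Next, I would construct the extension explicitly. Choose a flat basis on the universal cover $\widetilde{\mathbb{D}}_k^{*}$ and set $A_k := \tfrac{1}{2\pi i}\log M_k$ using the principal branch, so that $\exp(2\pi i A_k)=M_k$. The frame obtained by multiplying the flat basis by $\exp(A_k \log z)$ is single-valued on $\mathbb{D}_k^{*}$ (the monodromy of $z^{A_k}$ cancels the monodromy of the flat sections), and in this frame $\nabla$ takes the form $d+A_k\,\tfrac{dz}{z}$. Declaring this to be a trivializing frame over all of $\mathbb{D}_k$ gives a locally free extension $(\overline{\mathcal{V}},\overline{\nabla})$ across $p_k$ whose residue equals $A_k$. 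Gluing the $m$ local extensions to $(\mathcal{V},\nabla)$ along the overlaps $\mathbb{D}_k^{*}\subset \mathbb{P}^{1}_{(p_1,\ldots,p_m)}$ via the identity produces the global extension on $\mathbb{P}^{1}$.

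For uniqueness, I would show that if $(\overline{\mathcal{V}}',\overline{\nabla}')$ is any other extension with residue $A_k$ at $p_k$, then the identity on $\mathcal{V}$ extends to an isomorphism across each puncture. In coordinates around $p_k$ the comparison amounts to finding a holomorphic gauge $G(z)$ with $G(0)$ invertible intertwining the two connection matrices, and the standard regular-singular ODE theory shows such a $G$ exists and is unique precisely because the principal-branch choice forces the eigenvalues of $A_k$ to lie in $\{\lambda\in\mathbb{C} : 0\leq \mathrm{Re}(\lambda)<1\}$ — in particular, no two eigenvalues differ by a nonzero integer, so the usual resonance obstructions vanish. Functoriality is then automatic: a morphism $\phi:(\mathcal{V},\nabla)\to(\mathcal{V}',\nabla')$ commutes with local monodromy, hence intertwines the local frames $\exp(A_k \log z)\cdot(\text{flat basis})$, and so extends across each $p_k$. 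Exactness follows because a short exact sequence of representations yields block-triangular local monodromies, and the principal logarithm of a block-triangular invertible matrix is block-triangular with the same blocks, so the local extension construction is compatible with taking sub-objects and quotients.

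The main obstacle is the uniqueness step: one must genuinely use the principal-branch normalization to exclude extensions related by shifts of integer eigenvalues (the so-called elementary transformations at $p_k$). This is also the step that makes exactness delicate, since a priori the principal logarithm need not respect short exact sequences when the diagonal blocks have eigenvalues $1$ (i.e., $\log 1 = 0$ on both sides), but direct verification shows block-triangular form is preserved and the induced residues on sub and quotient agree with the principal logarithms of the corresponding monodromies.
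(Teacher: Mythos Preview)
The paper does not give a proof of this lemma at all; it simply cites \cite[Proposition~5.4]{Deligne2006}. Your proposal is a correct outline of precisely that construction (Deligne's canonical extension with residues normalized to the principal-branch strip), so you are filling in exactly what the citation points to and the approaches coincide.
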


\begin{proof}
    See \cite[Proposition $5.4$]{Deligne2006}.
\end{proof}

\begin{Defn} \label{Extended-Log-Definition}
    As a result of applying \Cref{Extension} to \Cref{Equiv-cat} we can associate to each monodromy representation $\rho$ of $\mathbb{P}^{1}_{(p_{1}, ..., p_{m})}$ a unique logarithmic connection over $\mathbb{P}^{1}$. 
    We refer to this connection as the $\textit{associated extended logarithmic connection}$ and denote it as $(\mathcal{V}_{\text{Log}(\rho)}, \nabla_{\text{Log}(\rho)})$.
\end{Defn}

\begin{Defn} \label{Roots}
    In the setting of \Cref{Extended-Log-Definition}, as $\mathcal{V}_{\text{Log}(\rho)} \cong \bigoplus_{j = 1}^{\text{dim} (\rho)} \mathcal{O}(\xi_{j})$ we call the twisting parameters $\xi_{j}$ the \textit{roots}.
\end{Defn}

\begin{Lm} \label{First-Chern-Class}
    The first Chern class of $\mathcal{V} \in \textnormal{Vect}(\mathbb{P}^{1})$, denoted $c_{1}(\mathcal{V})$, is the sum of the twisting parameters. 
\end{Lm}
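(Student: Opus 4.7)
The plan is to reduce the claim to the well known computation of the first Chern class on the twisting sheaves $\mathcal{O}(k)$ via additivity under direct sums. First, I invoke the Grothendieck splitting theorem \cite{G1957} recalled in the introduction to write
\begin{eqnarray*}
    \mathcal{V} \;\cong\; \bigoplus_{j=1}^{n} \mathcal{O}(\xi_{j})
\end{eqnarray*}
where the $\xi_{j} \in \mathbb{Z}$ are the twisting parameters (the roots, in the terminology of \Cref{Roots}).

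Next, I use the fact that the total Chern class is multiplicative under direct sums, so that in particular the first Chern class is additive: $c_{1}(\mathcal{V} \oplus \mathcal{W}) = c_{1}(\mathcal{V}) + c_{1}(\mathcal{W})$. Applying this inductively to the decomposition above reduces the statement to establishing $c_{1}(\mathcal{O}(\xi_{j})) = \xi_{j}$ for each $j$, after which summing over $j$ yields the desired identity $c_{1}(\mathcal{V}) = \sum_{j} \xi_{j}$.

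For the single line bundle computation, I would appeal to the standard identification $\mathrm{Pic}(\mathbb{P}^{1}) \cong \mathbb{Z}$, under which the class of $\mathcal{O}(1)$ corresponds to the generator; this is precisely the definition of the first Chern class on line bundles over $\mathbb{P}^{1}$, and gives $c_{1}(\mathcal{O}(k)) = k$ for every $k \in \mathbb{Z}$. Equivalently, one can realize $c_{1}(\mathcal{O}(k))$ as the degree of the associated divisor, which for $\mathcal{O}(k)$ is $k$ times the class of a point.

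There is no genuine obstacle here: the proof is a direct combination of Grothendieck's splitting, additivity of $c_{1}$, and the normalization $c_{1}(\mathcal{O}(1)) = 1$. The only thing to be careful about is the sign convention and the identification of $H^{2}(\mathbb{P}^{1}, \mathbb{Z})$ with $\mathbb{Z}$, both of which are fixed by the convention implicit in \Cref{Roots}.
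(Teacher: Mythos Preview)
Your argument is correct and is the standard one: Grothendieck splitting reduces to line bundles, additivity of $c_{1}$ under direct sums, and the normalization $c_{1}(\mathcal{O}(k)) = k$ coming from $\mathrm{Pic}(\mathbb{P}^{1}) \cong \mathbb{Z}$. The paper does not actually give a proof of this lemma in-line; it simply cites \cite[Lemma~$2.10$]{DY25}, so there is no alternative approach to compare against --- you have supplied exactly the elementary justification that the citation stands in for.
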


\begin{proof}
    See \cite[Lemma $2.10$]{DY25}.
\end{proof}

\begin{Thm}
    Let $(\mathcal{V}, \nabla)$ be a connection on $\mathbb{P}^{1}$ with logarithmic poles along $\{p_{1}, ..., p_{m}\}$, then the following holds:
    \begin{eqnarray}
    \label{Ohtsuki}
    \text{c}_{1}(\mathcal{V}) = - \sum\limits_{j=1}^{m} \emph{Tr}(\emph{Res}_{p_{j}}\nabla)
\end{eqnarray}

\noindent where $\emph{Res}_{p_{j}}\nabla$ is the residue of $\nabla$ at the pole $p_{j}$. 
\end{Thm}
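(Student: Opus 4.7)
The plan is to reduce to the rank-one case via the determinant, and then invoke the residue theorem for meromorphic differentials on $\mathbb{P}^{1}$.

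First I would pass to the determinant line bundle. Since the trace operation is natural, $\nabla$ induces a logarithmic connection $\operatorname{Tr}(\nabla)$ on $\det \mathcal{V}$ whose residue at $p_{j}$ is exactly $\operatorname{Tr}(\operatorname{Res}_{p_{j}}\nabla) \in \mathbb{C}$. Because $c_{1}(\mathcal{V}) = c_{1}(\det\mathcal{V}) = \deg(\det\mathcal{V})$, it suffices to establish the identity
\begin{equation}
\nonumber
\deg L \;=\; -\sum_{j=1}^{m} \alpha_{j}
\end{equation}
for a line bundle $L$ on $\mathbb{P}^{1}$ equipped with a logarithmic connection $\nabla_{L}$ whose residue at $p_{j}$ is the scalar $\alpha_{j}$.

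To prove this rank-one statement I would choose a rational section $s$ of $L$ (which exists since every line bundle on $\mathbb{P}^{1}$ has nonzero meromorphic sections). Writing $\operatorname{div}(s) = \sum_{i} n_{i} q_{i}$, one has $\deg L = \sum_{i} n_{i}$. The logarithmic derivative $\omega := \nabla_{L}(s)/s$ is then a meromorphic $1$-form on $\mathbb{P}^{1}$ whose residues can be read off locally: at each $q_{i}$ the Leibniz rule contributes a residue of $n_{i}$ coming from the order of vanishing (or pole) of $s$, while at each $p_{j}$, after moving the zeros and poles of $s$ away from the $p_{j}$ so that $s$ is a local frame there, the residue is exactly $\alpha_{j}$. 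Applying the classical residue theorem, which states that $\sum_{p \in \mathbb{P}^{1}} \operatorname{Res}_{p}(\omega) = 0$ for any meromorphic $1$-form, yields $\sum_{i} n_{i} + \sum_{j} \alpha_{j} = 0$, and hence the desired identity.

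The main obstacle is the bookkeeping in the rank-one computation: one must verify that $\nabla_{L}(s)/s$ is well-defined globally as a meromorphic $1$-form, and that its residues at the two kinds of points (the zeros/poles of $s$ and the logarithmic poles of $\nabla_{L}$) decompose cleanly as described. This requires choosing $s$ so that $\{q_{i}\} \cap \{p_{j}\} = \emptyset$, which is always possible by multiplying a given section by a suitable rational function on $\mathbb{P}^{1}$. A secondary subtlety is checking that the trace of a logarithmic connection is a logarithmic connection on the determinant bundle with the stated residue; this is a linear-algebra verification on the fiber at each $p_{j}$, using that the residue of $\nabla$ is an endomorphism of $\mathcal{V}_{p_{j}}$ and that $\det$ converts the additive action into multiplication while converting endomorphism-trace into scalar-trace.
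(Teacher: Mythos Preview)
Your argument is correct and is in fact the standard proof of this identity: reduce to the determinant line bundle (so that residues become traces), then realize $\nabla_{L}(s)/s$ as a meromorphic $1$-form on $\mathbb{P}^{1}$ and apply the residue theorem. The bookkeeping you flag (separating the support of $\operatorname{div}(s)$ from $\{p_{j}\}$, and checking that $\operatorname{Tr}(\nabla)$ really is a logarithmic connection on $\det\mathcal{V}$ with residue $\operatorname{Tr}(\operatorname{Res}_{p_{j}}\nabla)$) is routine and poses no difficulty.

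By contrast, the paper does not supply a proof at all: it simply cites Ohtsuki's original article. So there is no substantive comparison of approaches to make here---you have written out a self-contained argument where the paper defers to the literature. If anything, your version is more informative for the reader, since the reduction-to-rank-one-plus-residue-theorem strategy is short and illuminating, and works verbatim over any compact Riemann surface (not just $\mathbb{P}^{1}$).
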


\begin{proof}
    See \cite{Ohtsuki1982}.
\end{proof}

\begin{Rem}
    From \Cref{Ohtsuki} one can explicitly compute the first Chern class of any associated extended logarithmic connection from the monodromy representation. 
    Additionally, note that any higher Chern class vanishes for dimension reasons.
\end{Rem}

\begin{Defn}
   In the language of stable parabolic vector bundles, Equation $(\ref{Ohtsuki})$ goes by the Fuchs relation, but here we refer to it as $\textit{Ohtsuki's formula}$ to maintain consistency with \cite[\S $2$]{DY25}.  
\end{Defn} 

\begin{Rem}
    \Cref{Free-sheaf}, \Cref{Extension}, and \Cref{Ohtsuki} are special cases of the much more general statements proved in their respective sources.
\end{Rem}

\section{Summary of the Monodromy Derivative}\label{Summary}

When the representation $\rho$ is reducible, we take as a point of departure the short exact sequence $0 \rightarrow \rho' \rightarrow \rho \rightarrow \rho'' \rightarrow 0$, where a priori $ 0 < \text{dim}(\rho'), \text{dim}(\rho'') < \text{dim}(\rho)$. 
By combining \Cref{Equiv-cat} with \Cref{Extension} the short exact sequence of monodromy representations of $\mathbb{P}^{1}_{(0,1,\infty)}$ produces a short exact sequence $0 \rightarrow \mathcal{V}_{\text{Log}(\rho')} \rightarrow \mathcal{V}_{\text{Log}(\rho)} \rightarrow \mathcal{V}_{\text{Log}(\rho'')} \rightarrow 0$ of associated extended logarithmic bundles on $\mathbb{P}^{1}$. 
From here we find a lower and upper bound for the roots of $\mathcal{V}_{\text{Log}(\rho')}, \mathcal{V}_{\text{Log}(\rho'')}$ and combine the bounds with the long exact sequence in sheaf cohomology to find the roots of $\mathcal{V}_{\text{Log}(\rho)}$ (see \Cref{ReducibleSection}).

When the representation is irreducible, the approach above is futile, because our use of the long exact sequence in sheaf cohomology heavily relies on the fact that the short exact sequence of associated extended vector bundles corresponds to a short exact sequence of monodromy representations. 
Since $\rho$ is irreducible, we cannot construct such a short exact sequence in this scenario. This leaves us in search of a new apparatus to apply to this situation.
After observing the use of the modular derivative in papers such as \cite{MM2010}, \cite{Mason2008} and particularly in \cite{CHMY2018}, in \cite{DY25} we set out to construct an analogue of the modular derivative. 

For completeness and an attempt at self-containment, we provide a brief account of such constructions and state some useful facts. We summarize the construction below, concluding with \Cref{monodromy-derivative}.

\subsection{Auxiliary Connection}
The purpose of this subsection is to develop an auxiliary connection that acts on $\mathcal{O}(1)$. 

Take the character representation $\chi: \pi_{1}(\mathbb{P}^{1}_{(0,1,\infty)}) \rightarrow \mathbb{C}^{*}$ defined by mapping both generators to $-1$; applying \cite[Proposition $4.1$]{DY25}, the resulting associated extended logarithmic connection is $(\mathcal{V}_{\text{Log}(\chi)}, \nabla_{\text{Log}(\chi)})$ over $\mathbb{P}^{1}$ with $\mathcal{V}_{\text{Log}(\chi)} \cong \mathcal{O}(-1)$.
Furthermore, after taking the dual, we have $(\mathcal{O}(1), \nabla_{\text{Log}(\chi)}^{*})$ where we drop the subscript and the superscript and refer to the connection map simply as $\nabla$.
As $\nabla$ is a connection map it follows that
\begin{eqnarray}
    \nabla: \mathcal{O}(1) \rightarrow \mathcal{O}(1) \otimes_{\mathcal{O}} \Omega_{\mathbb{P}^{1}}^{1}([0]+[1]+[\infty])
\end{eqnarray}
\noindent where  
\begin{eqnarray}
    \nonumber
    \Omega_{\mathbb{P}^{1}}^{1}([0]+[1]+[\infty]) \cong \mathcal{O}(1),
\end{eqnarray}

\noindent consequently,
\begin{eqnarray}
    \nabla: \mathcal{O}(1) \rightarrow \mathcal{O}(2).
\end{eqnarray}

At this point we tensor $\nabla$ with itself in such a way that the resulting object is a connection map. Described explicitly on sections, for all sections $v,t \in O(1)$ the following is satisfied: $(\nabla \otimes \nabla)(v \otimes t) = \nabla(v) \otimes t + v \otimes \nabla(t)$. 
Therefore,  
\begin{eqnarray}
    \nabla^{\otimes 2}: \mathcal{O}(2) \rightarrow \mathcal{O}(3)
\end{eqnarray}
and so, inductively,  we define $\nabla_{\xi} := \nabla^{\otimes \xi}$. Now, note that 
\begin{eqnarray}
    \nabla_{\xi}: \mathcal{O}(\xi) \rightarrow \mathcal{O}(\xi+1).
\end{eqnarray}

\begin{Defn} \label{little-delta}
    We refer to $\nabla_{\xi}$ as the \textit{auxiliary connection map of weight} $\xi$ \textit{arising from} $\mathbb{P}^{1}_{(0,1,\infty)}$.
\end{Defn}

\subsection{Monodromy Derivative} 
Giving immediate validation to the naming of the auxiliary connection, when we consider a monodromy representation $\rho$ of $\mathbb{P}^{1}_{(0,1,\infty)}$, we view the associated extended logarithmic connection as a triple $(\mathcal{V}_{\text{Log}(\rho)}, \nabla_{\text{Log}(\rho)}, \nabla)$, where $\nabla$ refers to the auxiliary connection. 

Witness that we now have the new gizmo
\begin{eqnarray}
    \nabla_{Log(\rho)} \otimes \nabla: \mathcal{V}_{Log(\rho)} \otimes_{\mathcal{O}} \mathcal{O}(1) \rightarrow \mathcal{V}_{Log(\rho)} \otimes_{\mathcal{O}} \mathcal{O}(2) 
\end{eqnarray}

\noindent and more generally, considering an auxiliary connection map of arbitrary weight 
\begin{eqnarray}
    \nabla_{Log(\rho)} \otimes \nabla_{\xi}: \mathcal{V}_{Log(\rho)} \otimes_{\mathcal{O}} \mathcal{O}(\xi) \rightarrow \mathcal{V}_{Log(\rho)} \otimes_{\mathcal{O}} \mathcal{O}(\xi+1). 
\end{eqnarray}

\begin{Defn} \label{twisted-module}
    Turning our attention to global sections, define $\mathcal{N}_{\xi}(\rho) := H^{0}(\mathbb{P}^{1}, \mathcal{V}_{\text{Log}(\rho)} \otimes_{\mathcal{O}} \mathcal{O}(\xi))$, and furthermore, allow $\mathcal{N}(\rho) := \bigoplus_{\xi \in \mathbb{Z}} \mathcal{N}_{\xi}(\rho)$.
    The direct sum $\mathcal{N}(\rho)$ goes by \textit{the twisted module of $\rho$}.
\end{Defn}

\begin{Lm}\label{ModuleProperties}
    Given a monodromy representation $\rho$ of $\mathbb{P}^{1}_{(0,1,\infty)}$, $\mathcal{N}(\rho)$ has the following properties. 
    \begin{enumerate}
        \item There exists an integer $\xi_{\text{min}}$ such that for any $\xi_{j} < \xi_{\text{min}}$ we have  $\mathcal{N}_{\xi_{j}}(\rho) = 0$, which is referred to as \emph{the minimal weight}.
        \item Taking $\mathbb{P}^{1} = \emph{Proj }\mathbb{C}[x,y]$, let $R:= \mathbb{C}[x,y]$. Now, $\mathcal{N}(\rho)$ is a $\mathbb{Z}$-graded module of global sections over the ring $\mathcal{N}(\textbf{1}) \cong R$, where $\textbf{1}$ is the trivial one-dimensional representation. Additionally, there is an isomorphism $\mathcal{N}(\rho) \cong \bigoplus_{i = 1}^{\emph{dim}(\rho)} R[-\xi_{i}]$, where by $R[b]$ we denote the rank-one graded module over $R$ obtained by shifting the grading by $b$. The $-\xi_{i}$ are called the \emph{generating weights}. 
        \item The roots of $\mathcal{V}_{\emph{Log}(\rho)}$ are the negatives of the generating weights for $\mathcal{N}(\rho)$.
    \end{enumerate}
\end{Lm}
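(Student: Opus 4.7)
The plan is to reduce every assertion to Grothendieck's splitting theorem together with the elementary identification $H^{0}(\mathbb{P}^{1}, \mathcal{O}(d)) = R_{d}$, the space of homogeneous polynomials of degree $d$ in $R$ (vanishing when $d < 0$). Writing $\mathcal{V}_{\text{Log}(\rho)} \cong \bigoplus_{i = 1}^{\text{dim}(\rho)} \mathcal{O}(\xi_{i})$, tensoring with $\mathcal{O}(\xi)$, and using additivity of $H^{0}$ over direct sums, I would first establish the identity
\begin{eqnarray}
    \nonumber
    \mathcal{N}_{\xi}(\rho) \cong \bigoplus_{i = 1}^{\text{dim}(\rho)} R_{\xi_{i} + \xi}
\end{eqnarray}
which drives the rest of the argument. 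Property (1) then follows at once: the right-hand side vanishes precisely when $\xi_{i} + \xi < 0$ for every $i$, so one may set $\xi_{\text{min}} := -\max_{i} \xi_{i}$.

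For (2), the graded $R$-module structure comes from tensoring $\mathcal{V}_{\text{Log}(\rho)}$ with the natural multiplication maps $\mathcal{O}(\xi) \otimes \mathcal{O}(\eta) \to \mathcal{O}(\xi + \eta)$ and passing to global sections. Specializing to $\rho = \mathbf{1}$, where $\mathcal{V}_{\text{Log}(\mathbf{1})} \cong \mathcal{O}$, simultaneously identifies $\mathcal{N}(\mathbf{1}) \cong R$ as graded rings. With the shift convention $R[b]_{d} = R_{d - b}$ (so that $(R[-\xi_{i}])_{\xi} = R_{\xi + \xi_{i}}$), the identity displayed above repackages grade-by-grade as $\mathcal{N}(\rho) \cong \bigoplus_{i} R[-\xi_{i}]$ as graded $R$-modules, yielding both the freeness and the stated generating weights.

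Property (3) is then definitional: by \Cref{Roots} the $\xi_{i}$ are the roots of $\mathcal{V}_{\text{Log}(\rho)}$, and they appear as $-\xi_{i}$ in the generating weights of $\mathcal{N}(\rho)$. I do not anticipate a substantive obstacle here; once Grothendieck's splitting is invoked, the remaining work is bookkeeping. The one point to pin down at the outset is the direction of the grading shift in $R[b]$, so that the sign in $R[-\xi_{i}]$ lines up with the statement.
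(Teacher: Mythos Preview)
Your proposal is correct and is the natural argument. The paper itself does not actually prove this lemma: its entire proof is the single line ``See \cite[\S $5$]{DY25}.'' You have supplied a self-contained argument in its place, and every step checks out---Grothendieck splitting gives $\mathcal{V}_{\text{Log}(\rho)} \cong \bigoplus_i \mathcal{O}(\xi_i)$, tensoring with $\mathcal{O}(\xi)$ and taking $H^0$ gives $\mathcal{N}_\xi(\rho) \cong \bigoplus_i R_{\xi_i + \xi}$, and the three properties are then immediate bookkeeping exactly as you describe. Your shift convention $(R[b])_d = R_{d-b}$ is the one that makes the generators sit in degree $b = -\xi_i$, consistent with the paper's usage. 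Since the cited source \cite{DY25} is by the same author and this is the only reasonable way to establish the lemma, your argument almost certainly coincides with what is done there.
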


\begin{proof}
    See \cite[\S $5$]{DY25}.
\end{proof}

On global sections, from the new connection map $\nabla_{\emph{Log}(\rho)} \otimes \nabla_{\xi}$ we obtain a graded derivation of degree one. 
Indeed, let us denote the connection map $\nabla_{\emph{Log}(\rho)} \otimes \nabla_{\xi}$ as $D_{\xi}$ when applied to global sections, so that
\begin{eqnarray}
    D_{\xi}: \mathcal{N}_{\xi}(\rho) \rightarrow \mathcal{N}_{\xi+1}(\rho)
\end{eqnarray}

\noindent where of course $D_{\xi}$ satisfies the Leibniz rule, as $\nabla_{Log(\rho)} \otimes \nabla_{\xi}$ is a connection map, and so 
\begin{eqnarray}
    D := \bigoplus_{\xi \in \mathbb{Z}} D_{\xi}
\end{eqnarray}

\noindent is a graded derivation of degree one on $\mathcal{N}(\rho)$.

\begin{Rem}\label{Degree-of-D}
    The degree of the derivation is a consequence of the number of points removed from $\mathbb{P}^{1}$. The degree of the derivation will be $m-2$ for $m>1$, where $m \in \mathbb{N}$ corresponds to the number of points removed. This is essentially a consequence of the isomorphism 
    \begin{eqnarray}
        \nonumber
        \Omega^{1}_{\mathbb{P}^{1}} \Big(\sum_{i=1}^{m}[p_{i}] \Big) \cong \mathcal{O}(m-2).
    \end{eqnarray}
    where of course the $p_{i}$ represent the points removed. 
    Consult with \cite[Remark $5.6$]{DY25} for $\mathcal{N}(\rho)$ when $\rho$  is a monodromy representation of $\mathbb{P}^{1}_{(p_{1}, ..., p_{m})}$. 
\end{Rem}

\begin{Defn} \label{monodromy-derivative}
    We refer to the graded derivation $D$ as the \textit{monodromy derivative of} $\rho$.
\end{Defn}

To culminate this section we mention that the monodromy derivative and its properties will be heavily utilized in \Cref{IrreducibleSection}.

\section{Reducible Monodromy Representations} \label{ReducibleSection}
The goal of this section is to compute the roots of any vector bundle arising from a reducible three-dimensional monodromy representation of $\mathbb{P}^{1}_{(0, 1, \infty)}$. 
As mentioned in the previous section, the strategy is to first find a lower and an upper bound for the roots of $\mathcal{V}_{\text{Log}(\rho)}$ when $\rho$ is a two-dimensional monodromy representation of $\mathbb{P}^{1}_{(0, 1, \infty)}$, and combine those bounds with sheaf cohomology.

\begin{Lm} \label{Chern-Bound}
    Let $\rho: \mathbb{Z}*\mathbb{Z} \rightarrow \emph{GL}_{2}(\mathbb{C})$ be a two-dimensional monodromy representation of $\mathbb{P}^{1}_{(0,1,\infty)}$ with $(\mathcal{V}_{\emph{Log}(\rho)}, \nabla_{\emph{Log}(\rho)})$ the associated extended logarithmic connection. Then 
    \begin{eqnarray}
        \nonumber
        0 \geq c_{1}(\mathcal{V}_{\emph{Log}(\rho)}) \geq -4.
    \end{eqnarray}
\end{Lm}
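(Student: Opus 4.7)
The plan is to apply Ohtsuki's formula to write
\begin{equation*}
c_{1}(\mathcal{V}_{\text{Log}(\rho)}) = -\sum_{p \in \{0,1,\infty\}} \text{Tr}(\text{Res}_{p}\nabla_{\text{Log}(\rho)})
\end{equation*}
and then bound each residue trace using Deligne's extension together with the principal branch convention. By \Cref{Extension}, the residue $\text{Res}_{p}\nabla_{\text{Log}(\rho)}$ is $\frac{1}{2\pi i}$ times the principal logarithm of the local monodromy $\rho(\gamma_{p})$, so its two eigenvalues $q_{p,1}, q_{p,2}$ have real parts lying in $[0,1)$. Consequently $\text{Tr}(\text{Res}_{p}\nabla_{\text{Log}(\rho)}) = q_{p,1} + q_{p,2} \in [0,2)$ as a real quantity, and summing over the three punctures gives a real number in $[0,6)$.

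Because $\det\rho(\gamma_{0})\det\rho(\gamma_{1})\det\rho(\gamma_{\infty}) = \det(\rho(\gamma_{0}\gamma_{1}\gamma_{\infty})) = 1$, the imaginary contributions from the moduli of the monodromy eigenvalues cancel in the sum, so it is a nonnegative real integer. This immediately produces the upper bound $0 \geq c_{1}(\mathcal{V}_{\text{Log}(\rho)})$ as well as the a priori lower bound $c_{1}(\mathcal{V}_{\text{Log}(\rho)}) \geq -5$.

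To sharpen $-5$ into the claimed $-4$, my plan is to relate $\rho$ to its determinant representation $\det\rho: \mathbb{Z}\ast\mathbb{Z} \to \mathbb{C}^{*}$. Writing $q_{p}^{\det} \in [0,1)$ for the principal $q$-value of $\det\rho(\gamma_{p})$ and setting $k_{p} := q_{p,1} + q_{p,2} - q_{p}^{\det} \in \{0,1\}$, one gets
\begin{equation*}
-c_{1}(\mathcal{V}_{\text{Log}(\rho)}) = -c_{1}(\mathcal{V}_{\text{Log}(\det\rho)}) + \sum_{p \in \{0,1,\infty\}} k_{p},
\end{equation*}
with $-c_{1}(\mathcal{V}_{\text{Log}(\det\rho)}) \in \{0,1,2\}$ (by applying the same argument to the one-dimensional representation $\det\rho$) and $\sum_{p} k_{p} \in \{0,1,2,3\}$. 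The extremal value $-c_{1} = 5$ would thus force $-c_{1}(\mathcal{V}_{\text{Log}(\det\rho)}) = 2$ and $k_{p} = 1$ for each $p$ simultaneously.

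The main obstacle will be excluding this extremal configuration, since it cannot be ruled out from the determinantal identity alone. To close the gap I would exploit the full matrix relation $\rho(\gamma_{0})\rho(\gamma_{1})\rho(\gamma_{\infty}) = I$ (strictly stronger than $\det\rho(\gamma_{0})\det\rho(\gamma_{1})\det\rho(\gamma_{\infty}) = 1$) and trace through the angular constraints this imposes on the pairs of eigenvalues of each $\rho(\gamma_{p}) \in \text{GL}_{2}(\mathbb{C})$, organizing the analysis by the Jordan type of each local monodromy. A contradiction should emerge from combining these constraints with the requirement that all six $q_{p,j}$ are forced to lie close to $1$.
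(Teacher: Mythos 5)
Your first two steps coincide with the paper's: Ohtsuki's formula together with the principal-branch convention gives $-c_{1}(\mathcal{V}_{\text{Log}(\rho)})=\sum_{p}\sum_{j}q_{p,j}$ with all six $q_{p,j}\in[0,1)$, and the determinant identity kills the modulus contributions, so $0\geq c_{1}\geq -5$. The genuine gap is exactly the step you yourself flag as ``the main obstacle'': excluding $-c_{1}=5$. You do not carry it out, and the route you propose --- deriving a contradiction from the full relation $\rho(\gamma_{0})\rho(\gamma_{1})\rho(\gamma_{\infty})=I$ by ``tracing through the angular constraints'' on the eigenvalues, organized by Jordan type --- is unlikely to close it. For $2\times 2$ matrices the group relation imposes essentially no constraint on the eigenvalue arguments beyond the determinant identity you have already used: by the normal form the paper quotes from [IKSY, Ch.\ 2, Thm.\ 4.2.1], any prescribed spectral data $(\lambda_{0},\mu_{0};\lambda_{1},\mu_{1};\nu_{0},\nu_{1})$ with $\lambda_{0}\mu_{0}\lambda_{1}\mu_{1}=\nu_{0}\nu_{1}$ is realized by an explicit pair $(M_{0},M_{1})$ for which $M_{0}M_{1}$ has spectrum $\{\nu_{0},\nu_{1}\}$. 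So your extremal configuration ($-c_{1}(\mathcal{V}_{\text{Log}(\det\rho)})=2$ together with $k_{p}=1$ for all three punctures) cannot be refuted by bookkeeping on the three conjugacy classes alone, and your sketch names no other mechanism from which ``a contradiction should emerge.''

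The paper, for what it is worth, does not argue by contradiction at the extremal value at all. It parametrizes the local monodromy at $\infty$ as $(M_{0}M_{1})^{-1}$, writes the principal $q$-values of its eigenvalues as $-q_{\nu,j}$ plus nonnegative integers $a,b$ recording the wrap-around under inversion, and then invokes the relation between the $q$-sum of the eigenvalues of $M_{0},M_{1}$ and that of $M_{0}M_{1}$ (not of its inverse) to reduce $-c_{1}$ to $a+b$, which it then bounds by $4$; the reducible case is handled by the same computation in a simultaneous triangularization. Whether or not one finds that bookkeeping fully transparent, it is a concretely different accounting from your determinant-plus-$k_{p}$ decomposition, and it is precisely the piece your proposal lacks. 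As written, your argument establishes only $0\geq c_{1}\geq -5$.
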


\begin{proof}
    Let $\rho: \mathbb{Z}*\mathbb{Z} \rightarrow \text{GL}_{2}(\mathbb{C})$ be a two-dimensional representation defined by $\gamma_{0} \mapsto M_{0}$ and $\gamma_{1} \mapsto M_{1}$, where $\lambda_{0}, \mu_{0}$ and $\lambda_{1}, \mu_{1}$ are the eigenvalues of $M_{0}$ and $M_{1}$, respectively. Further, let $\lambda_{0} = r_{\lambda,0}e^{2\pi i q_{\lambda,0}}, \mu_{0} = r_{\mu,0}e^{2\pi i q_{\mu,0}}, \lambda_{1} = r_{\lambda, 1} e^{2\pi i q_{\lambda,1}},$ and $\mu_{1} = r_{\mu,1}e^{2\pi i q_{\mu,1}}$. 
    Similarly, we allow $\nu_{0} = r_{\nu,0}e^{2\pi i q_{\nu,0}}$ and $\nu_{1} = r_{\nu,1}e^{2\pi i q_{\nu,1}}$ to be the eigenvalues of $M_{0}M_{1}$.
    We remark that we will explicitly establish the bounds when $\rho$ is irreducible, as the reducible case will follow the exact same procedure. 
    For now, suppose that $\rho$ is irreducible. By \cite[Ch $2$, Theorem $4.2.1$]{IKSY2013} there exists a basis $\hat{\Upsilon}$ in which
    \begin{eqnarray}
        \nonumber
        M_{0} \leftrightarrow 
        \begin{pmatrix}
            \lambda_{0} && 1 \\
            0 && \mu_{0} 
        \end{pmatrix} \text{ and  }
        M_{1} \leftrightarrow 
        \begin{pmatrix}
            \lambda_{1} && 0 \\
            \sigma && \mu_{1} 
        \end{pmatrix}
    \end{eqnarray}

    \noindent where $\sigma = (\nu_{0} + \nu_{1}) - (\lambda_{0}\lambda_{1} + \mu_{0}\mu_{1}) \not= 0$. 
    We will employ Ohtsuki's formula to bound $c_{1}(\mathcal{V}_{\text{Log}(\rho)})$. Looking at the trace of the residues of the connection map, it is clear that $\text{Tr}(\text{log}(M_{0})) = \text{log}(\lambda_{0}) + \text{log}(\mu_{0})$ and $\text{Tr}(\text{log}(M_{1})) = \text{log}(\lambda_{1}) + \text{log}(\mu_{1})$.
    Recall the fact that if $S \in \text{GL}_{n}(\mathbb{C})$ and $T \in \text{Mat}(n \times n, \mathbb{C})$ then
    \begin{eqnarray} \label{Jf}
        \text{exp}(S^{-1}TS) = S^{-1}\text{exp}(T)S.
    \end{eqnarray}
    Observe that as a consequence, to find $\text{Tr}(\text{log}((M_{0}M_{1})^{-1}))$, we may suppose that $M_{0}M_{1}$ is in Jordan form. Then it is clear that $\text{Tr}(\text{log}((M_{0}M_{1})^{-1})) = \text{log}(\nu_{0}^{-1}) + \text{log}(\nu_{1}^{-1})$. Using Ohtsuki's formula, we have 
    \begin{eqnarray}
        \nonumber
        c_{1}(\mathcal{V}_{\text{Log}(\rho)}) = -\text{log}(\lambda_{0}) - \text{log}(\mu_{0}) - \text{log}(\lambda_{1}) - \text{log}(\mu_{1}) - \text{log}(\nu_{0}^{-1}) - \text{log}(\nu_{1}^{-1}),  
    \end{eqnarray}
    which, after rearranging the real and imaginary components, we write
    \begin{eqnarray}
        \nonumber
        c_{1}(\mathcal{V}_{\text{Log}(\rho)}) = -\text{log}(r_{\gamma,0}r_{\mu,0}r_{\gamma,1}r_{\mu,1}r_{\nu,0}^{-1}r_{\nu,1}^{-1}) - q_{\lambda,0} - q_{\mu,0} - q_{\lambda,1} - q_{\mu,1} - (-q_{\nu,0}+a - q_{\nu,1} + b),  
    \end{eqnarray}
    such that $a,b \in \mathbb{Z}_{\geq 0}$ appear from the fact that the exponent of each $\nu_{j}^{-1}$ with $j=1,2$ must be an element of $[0,1)$ by the choice of logarithm. 
    Since the determinant of the product is the product of the determinants, it follows that $r_{\gamma,0}r_{\mu,0}r_{\gamma,1}r_{\mu,1}r_{\nu,0}^{-1}r_{\nu,1}^{-1} = 1$ and $q_{\lambda,0} + q_{\mu,0} + q_{\lambda,1} + q_{\mu,1} = q_{\nu,0} + q_{\nu,1}$. Thus, 
    \begin{eqnarray}
        c_{1}(\mathcal{V}_{\text{Log}(\rho)}) = -a -b.
    \end{eqnarray}
    Now, when all the $q$s are zero, then $a = b = 0$, establishing the upper bound. 
    For the lower bound, since each $q_{\lambda,i} \in [0,1)$, and $q_{\mu,j} \in [0,1)$ for $i,j \in \{0,1\}$, $q_{\lambda,0} + q_{\mu,0} + q_{\lambda,1} + q_{\mu,1} < 4$, and so when $3 < q_{\lambda,0} + q_{\mu,0} + q_{\lambda,1} + q_{\mu,1} < 4$, we must have that $a+ b = 4$. 

    For the case when $\rho$ is reducible, first notice that as a consequence of \cite[Ch $2$, Theorem $4.2.1$]{IKSY2013} we may assume that
    \begin{eqnarray}
        \nonumber
        M_{0} \leftrightarrow 
        \begin{pmatrix}
            \lambda_{0} && \vartheta_{1} \\
            0 && \mu_{0} 
        \end{pmatrix} \text{ and  }
        M_{1} \leftrightarrow 
        \begin{pmatrix}
            \lambda_{1} && \vartheta_{2} \\
            0 && \mu_{1} 
        \end{pmatrix}
    \end{eqnarray}
    with $\vartheta_{1},\vartheta_{2} \in \mathbb{C}$. Employing Equation $(\ref{Jf})$ to find $\text{Tr}(\text{log}((M_{0}M_{1})^{-1}))$, we may suppose that $M_{0}M_{1}$ is in Jordan form. By Ohtsuki's formula we then obtain 
    \begin{eqnarray}
        \nonumber
        c_{1}(\mathcal{V}_{\text{Log}(\rho)}) = -\text{log}(\lambda_{0}) - \text{log}(\mu_{0}) - \text{log}(\lambda_{1}) - \text{log}(\mu_{1}) - \text{log}(\nu_{0}^{-1}) - \text{log}(\nu_{1}^{-1}),  
    \end{eqnarray}
    and the remainder of the proof is identical to the case when $\rho$ is irreducible. 

\end{proof}

\begin{Cor} \label{Parabolic_Consistent}
    Let $\rho: \mathbb{Z}*\mathbb{Z} \rightarrow \emph{GL}_{2}(\mathbb{C})$ be an irreducible unitary two-dimensional monodromy representation of $\mathbb{P}^{1}_{(0,1,\infty)}$ with $(\mathcal{V}_{\emph{Log}(\rho)}, \nabla_{\emph{Log}(\rho)})$ the associated extended logarithmic connection. Then
    \begin{eqnarray}
        \nonumber
        0 > c_{1}(\mathcal{V}_{\emph{Log}(\rho)}) \geq -4.
    \end{eqnarray}
\end{Cor}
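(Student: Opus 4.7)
The plan is to deduce the corollary directly from Lemma~\ref{Chern-Bound}, which already supplies the bounds $0 \geq c_{1}(\mathcal{V}_{\text{Log}(\rho)}) \geq -4$; so the only new content is to upgrade the upper bound to the strict inequality $c_{1}(\mathcal{V}_{\text{Log}(\rho)}) < 0$ using the added hypotheses of irreducibility and unitarity. I will argue by contradiction: assuming $c_{1}(\mathcal{V}_{\text{Log}(\rho)}) = 0$, I will show $\rho$ must be the trivial representation and hence reducible.

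First, I would revisit the final lines of the proof of Lemma~\ref{Chern-Bound}. In the irreducible case one obtains $c_{1}(\mathcal{V}_{\text{Log}(\rho)}) = -a - b$ with $a,b \in \mathbb{Z}_{\geq 0}$, where $a$ and $b$ arise from the fact that the principal branch forces the imaginary parts of $\log(\nu_{0}^{-1})$ and $\log(\nu_{1}^{-1})$ to lie in $[0,1)$. More precisely, $a = 0$ iff $q_{\nu,0} = 0$ and $b = 0$ iff $q_{\nu,1} = 0$. Hence the assumption $c_{1} = 0$ forces $q_{\nu,0} = q_{\nu,1} = 0$. Combining this with the determinant identity established in that same proof, $q_{\lambda,0} + q_{\mu,0} + q_{\lambda,1} + q_{\mu,1} = q_{\nu,0} + q_{\nu,1}$, and noting each summand lies in $[0,1)$, I conclude that every phase $q_{\lambda,i}, q_{\mu,i}$ also vanishes.

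Next, I would invoke unitarity. Since every eigenvalue of a unitary matrix has modulus one, $r_{\lambda,i} = r_{\mu,i} = 1$ for $i = 0,1$. Together with the vanishing of all phases this forces $\lambda_{0} = \mu_{0} = \lambda_{1} = \mu_{1} = 1$. Passing to a basis in which $\rho$ is realized by unitary (hence normal, hence diagonalizable) matrices, the images $M_{0}, M_{1}$ are diagonalizable operators whose only eigenvalue is $1$, so $M_{0} = M_{1} = I$. Thus $\rho$ is the trivial two-dimensional representation, which is reducible and contradicts the hypothesis. Therefore $c_{1}(\mathcal{V}_{\text{Log}(\rho)}) < 0$.

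The only step of substance is the equality analysis in Lemma~\ref{Chern-Bound}: recognizing that $c_{1} = 0$ collapses every phase datum to zero. Once that is in hand, unitarity plus the normality/diagonalizability of unitary matrices finishes the argument essentially for free, so I do not anticipate any real obstacle beyond carefully tracking the principal-branch bookkeeping already present in the proof of Lemma~\ref{Chern-Bound}.
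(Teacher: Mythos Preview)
Your proposal is correct and follows essentially the same route as the paper's own proof: both invoke the equality analysis in Lemma~\ref{Chern-Bound} to show that $c_1=0$ forces every phase to vanish and then (via unitarity, so that all moduli are $1$) every eigenvalue to equal $1$, contradicting irreducibility. The only difference is cosmetic: the paper closes by citing the eigenvalue criterion of \cite[Ch.~2, Theorem~4.2.1]{IKSY2013} (for irreducible $\rho$ no product $\lambda_i\mu_j$ can be an eigenvalue of $M_0M_1$), whereas you observe directly that a unitary---hence diagonalizable---matrix with all eigenvalues $1$ is the identity, so $\rho$ is trivial and in particular reducible.
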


\begin{proof}
    As $\rho$ is irreducible, by \cite[Ch $2$, Theorem $4.2.1$]{IKSY2013}, the product of any eigenvalue of $\rho(\gamma_{0})$ with any eigenvalue of $\rho(\gamma_{1})$ does not equal an eigenvalue of $\rho(\gamma_{0}\gamma_{1})$. Considering the part of the proof of \Cref{Chern-Bound} when $\rho$ is irreducible, observe that since $\rho$ is unitary, we must have $r_{i,j} = 1$ for all $i \in \{\gamma, \mu, \nu\}, j \in \{0,1\}$.
    In order to obtain $c_{1}(\mathcal{V}_{\text{Log}(\rho)}) = 0$, we need $a = b = 0$, but this only occurs when $q_{\lambda,i} = 0, q_{\mu,j} = 0$, and $q_{\nu,k} = 0$ for $i,j,k \in \{0,1\}$. Witness then that, if $c_{1}(\mathcal{V}_{\text{Log}(\rho)}) = 0$, then $q_{\lambda,i} = 0, q_{\mu,j} = 0$, and $q_{\nu,k} = 0$ for $i,j,k \in \{0,1\}$ along with $r_{i,j} = 1$ for all $i \in \{\gamma, \mu, \nu\}, j \in \{0,1\}$, contradicting the fact that the product of any eigenvalue of $\rho(\gamma_{0})$ with any eigenvalue of $\rho(\gamma_{1})$ does not equal an eigenvalue of $\rho(\gamma_{0}\gamma_{1})$. Whence, the result $c_{1}(\mathcal{V}_{\text{Log}(\rho)}) \not= 0$.
\end{proof}

\begin{Lm} \label{Root-Bound}
    Let $\rho: \mathbb{Z}*\mathbb{Z} \rightarrow \emph{GL}_{2}(\mathbb{C})$ be a two-dimensional monodromy representation of $\mathbb{P}^{1}_{(0,1,\infty)}$ with $(\mathcal{V}_{\emph{Log}(\rho)}, \nabla_{\emph{Log}(\rho)})$ the associated extended logarithmic connection. Then for each root of $\mathcal{V}_{\emph{Log}(\rho)}$, $\xi_{i}$, where $j =1,2$ we have   
    \begin{eqnarray}
        \nonumber
        0 \geq \xi_{i} \geq -2.
    \end{eqnarray}
\end{Lm}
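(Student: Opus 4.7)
The plan is to split into the cases that $\rho$ is reducible and $\rho$ is irreducible, handling each with a different tool but anchoring both in the Chern class bound $-4 \leq \xi_1 + \xi_2 = c_1(\mathcal{V}_{\emph{Log}(\rho)}) \leq 0$ supplied by \Cref{Chern-Bound}. Throughout, I would order the roots so that $\xi_1 \leq \xi_2$.

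When $\rho$ is reducible, I would start from a short exact sequence $0 \to \chi' \to \rho \to \chi'' \to 0$ with $\chi', \chi''$ one-dimensional. Exactness of the Deligne extension (\Cref{Extension}) produces a short exact sequence
\[
0 \to \mathcal{O}(\eta') \to \mathcal{V}_{\emph{Log}(\rho)} \to \mathcal{O}(\eta'') \to 0
\]
of bundles on $\mathbb{P}^1$. A one-dimensional version of the computation in \Cref{Chern-Bound} confines $\eta', \eta'' \in \{-2, -1, 0\}$. Since $\mathrm{Ext}^1(\mathcal{O}(\eta''), \mathcal{O}(\eta')) \cong H^1(\mathbb{P}^1, \mathcal{O}(\eta' - \eta''))$ vanishes unless $\eta' - \eta'' \leq -2$, the sequence splits except when $(\eta', \eta'') = (-2, 0)$; in the split case the roots are just $\{\eta', \eta''\} \subset [-2, 0]$. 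For the lone non-split possibility, the only rank-two bundle on $\mathbb{P}^1$ with $c_1 = -2$ that is not $\mathcal{O}(-2) \oplus \mathcal{O}(0)$ is $\mathcal{O}(-1) \oplus \mathcal{O}(-1)$, yielding $(\xi_1, \xi_2) = (-1, -1)$. Either way, both roots lie in $[-2, 0]$.

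When $\rho$ is irreducible, \Cref{Equiv-cat} implies that $\mathcal{V}_{\emph{Log}(\rho)}$ admits no $\nabla_{\emph{Log}(\rho)}$-stable line subsheaf; in particular, the summand $\mathcal{O}(\xi_2)$ of a Grothendieck splitting $\mathcal{V}_{\emph{Log}(\rho)} \cong \mathcal{O}(\xi_1) \oplus \mathcal{O}(\xi_2)$ is not preserved by $\nabla_{\emph{Log}(\rho)}$. The associated second fundamental form
\[
\mathcal{O}(\xi_2) \hookrightarrow \mathcal{V}_{\emph{Log}(\rho)} \xrightarrow{\nabla_{\emph{Log}(\rho)}} \mathcal{V}_{\emph{Log}(\rho)} \otimes \Omega^1_{\mathbb{P}^1}([0]+[1]+[\infty]) \twoheadrightarrow \mathcal{O}(\xi_1) \otimes \mathcal{O}(1) = \mathcal{O}(\xi_1 + 1)
\]
is an $\mathcal{O}$-linear, nonzero morphism, forcing $\xi_2 \leq \xi_1 + 1$. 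Combining $\xi_2 - \xi_1 \leq 1$ with $-4 \leq \xi_1 + \xi_2 \leq 0$ and $\xi_1 \leq \xi_2$ forces $\xi_2 \leq 0$ and $\xi_1 \geq -2$, a quick case check on each possible value of $c_1 \in \{0, -1, -2, -3, -4\}$.

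The main obstacle will be the reducible non-split case: verifying that among the nine a priori pairs $(\eta', \eta'') \in \{-2, -1, 0\}^2$ the extension obstruction is nontrivial only at $(-2, 0)$, and that the non-split extension then actually realizes $\mathcal{O}(-1)^{\oplus 2}$ rather than some phantom indecomposable bundle. The irreducible case, by contrast, is essentially the slope inequality for the maximal destabilizing subbundle, once the Riemann--Hilbert equivalence rules out a $\nabla$-stable line subsheaf.
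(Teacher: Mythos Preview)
Your argument is correct and matches the paper's route: the paper's proof is literally two citations to \cite{DY25}, where the reducible case is the same Ext/splitting analysis you spell out (\cite[Theorem~4.4]{DY25} together with the one-dimensional bound \cite[Proposition~4.1]{DY25}), and the irreducible case is \cite[Theorem~5.9]{DY25}, whose content is precisely your second-fundamental-form inequality $\xi_2-\xi_1\le 1$ --- the monodromy derivative has degree $1$ exactly because $\Omega^1_{\mathbb{P}^1}([0]+[1]+[\infty])\cong\mathcal{O}(1)$, so ``no proper $D$-closed submodule'' and ``nonzero second fundamental form into $\mathcal{O}(\xi_1+1)$'' are the same constraint in two languages.

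One wording fix: the sentence ``the only rank-two bundle on $\mathbb{P}^1$ with $c_1=-2$ that is not $\mathcal{O}(-2)\oplus\mathcal{O}(0)$ is $\mathcal{O}(-1)^{\oplus 2}$'' is false as written (take $\mathcal{O}(a)\oplus\mathcal{O}(-2-a)$ for any $a$). What you actually need --- and what the connecting map in the long exact sequence gives, since a non-split class makes $H^0(\mathcal{O})\to H^1(\mathcal{O}(-2))$ injective and hence $h^0(E)=0$ --- is that the unique non-split extension of $\mathcal{O}$ by $\mathcal{O}(-2)$ is $\mathcal{O}(-1)^{\oplus 2}$.
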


\begin{proof}
    With regard to the case when $\rho$ is irreducible, combine \cite[Theorem $5.9$]{DY25} with \Cref{First-Chern-Class} and \Cref{Chern-Bound}. Regarding the case when $\rho$ is reducible, combine \cite[Theorem $4.4$]{DY25} with \cite[Proposition $4.1$]{DY25}.
\end{proof}

\begin{Rem}
    \Cref{Parabolic_Consistent} has been known by the individuals who work on the moduli space of stable parabolic vector bundles with logarithmic connection over $\mathbb{P}_{\mathbb{C}}^{1}$. 
    The special case of \Cref{Root-Bound} for which $\rho$ is irreducible and unitary is also known by the same set of individuals; for a geometric proof of the bound when $\rho$ is irreducible \& unitary see, for example, \cite[\S $2$]{MT2021}.
    
    The proof of the following proposition along with the proof of \cite[Lemma $5.8$]{DY25} and the proof of \Cref{Simple-Module}, highlight the standard trick regarding $\mathcal{N}(\rho)$ and $D$.
\end{Rem}

\begin{Prop}\label{NonRoots}
    Let $\rho$ be a three-dimensional reducible monodromy representation of $\mathbb{P}^{1}_{(0,1,\infty)}$. Then $\mathcal{V}_{\emph{Log}(\rho)} \not\cong \mathcal{O}(-3) \oplus \mathcal{O}(-1) \oplus \mathcal{O}(0)$.
\end{Prop}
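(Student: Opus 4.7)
The plan is to argue by contradiction. Suppose that $\mathcal{V}_{\text{Log}(\rho)} \cong \mathcal{O}(-3) \oplus \mathcal{O}(-1) \oplus \mathcal{O}(0)$ and exploit the fact that the $\mathcal{O}(-3)$ summand has degree strictly below any summand that can appear in the associated extended bundle of a one- or two-dimensional monodromy representation of $\mathbb{P}^{1}_{(0,1,\infty)}$.

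Since $\rho$ is reducible, choose a short exact sequence $0 \to \rho_{s} \to \rho \to \rho_{q} \to 0$ with $1 \leq \dim \rho_{s}, \dim \rho_{q} \leq 2$. Combining \Cref{Equiv-cat} with the exactness of \Cref{Extension} produces a short exact sequence of bundles $0 \to \mathcal{V}_{\text{Log}(\rho_{s})} \to \mathcal{V}_{\text{Log}(\rho)} \to \mathcal{V}_{\text{Log}(\rho_{q})} \to 0$ on $\mathbb{P}^{1}$. I then intend to show every twisting parameter $\xi$ appearing in $\mathcal{V}_{\text{Log}(\rho_{s})}$ or $\mathcal{V}_{\text{Log}(\rho_{q})}$ satisfies $\xi \geq -2$: in the two-dimensional case this is precisely \Cref{Root-Bound}, while in the one-dimensional case it follows from \cite[Proposition $4.1$]{DY25} together with Ohtsuki's formula $(\ref{Ohtsuki})$, which, after writing each local monodromy of a character $\chi$ as $e^{2\pi i q_{i}}$ with $q_{i} \in [0,1)$, forces the single root of any one-dimensional representation of $\mathbb{P}^{1}_{(0,1,\infty)}$ to lie in $\{0, -1, -2\}$.

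Consequently $H^{0}(\mathcal{O}(-3-\xi)) = 0$ for every such $\xi$, so $\operatorname{Hom}(\mathcal{V}_{\text{Log}(\rho_{s})}, \mathcal{O}(-3)) = 0 = \operatorname{Hom}(\mathcal{V}_{\text{Log}(\rho_{q})}, \mathcal{O}(-3))$. Let $\pi \colon \mathcal{V}_{\text{Log}(\rho)} \twoheadrightarrow \mathcal{O}(-3)$ denote the projection onto the $(-3)$-summand; it is surjective, hence non-zero. Its composition $\pi \circ i$ with the inclusion $i \colon \mathcal{V}_{\text{Log}(\rho_{s})} \hookrightarrow \mathcal{V}_{\text{Log}(\rho)}$ is an element of the first vanishing Hom group, and hence zero. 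Therefore $\pi$ factors through the cokernel $\mathcal{V}_{\text{Log}(\rho_{q})}$, producing a map in the second vanishing Hom group; this forces $\pi = 0$, a contradiction.

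I expect the principal obstacle to be securing the uniform lower bound $\xi \geq -2$ across both one- and two-dimensional representations. For dimension two this is already \Cref{Root-Bound}, so only the rank-one bound requires separate attention, and once both are in hand the remainder of the argument is standard sheaf-cohomological bookkeeping on $\mathbb{P}^{1}$.
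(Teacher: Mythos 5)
Your argument is correct, but it takes a genuinely different route from the paper. The paper's proof is an application of the monodromy-derivative machinery: assuming generators $F,G,H$ of $\mathcal{N}(\rho)$ of weights $0,1,3$, it observes that $D_{0}(F)$ and $D_{1}(G)$ land in weights $1$ and $2$ and therefore cannot involve $H$, so $F$ and $G$ span a $D$-closed submodule; via the equivalence of categories this produces an explicit two-dimensional subrepresentation $\rho'\subsetneq\rho$ with $c_{1}(\mathcal{V}_{\text{Log}(\rho')})=-1$, forcing the quotient character to have bundle $\mathcal{O}(-3)$ and contradicting \cite[Proposition $4.1$]{DY25}. You instead start from an arbitrary subrepresentation (which exists by reducibility), invoke the root bounds $\xi\geq -2$ in ranks one and two (\cite[Proposition $4.1$]{DY25} and \Cref{Root-Bound}), and run a $\operatorname{Hom}(-,\mathcal{O}(-3))$-vanishing argument on the resulting short exact sequence of bundles; the factoring of the projection $\pi$ through the quotient is sound, and both Hom groups vanish for the reason you give. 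Your proof is more elementary in that it bypasses $\mathcal{N}(\rho)$ and $D$ entirely (the paper deliberately uses this proposition to showcase that machinery), and it in fact yields slightly more: the same vanishing shows $\operatorname{Hom}(\mathcal{V}_{\text{Log}(\rho)},\mathcal{O}(\xi))=0$ for every $\xi\leq -3$, so no root of a reducible three-dimensional representation can drop below $-2$, anticipating the lower bound of \Cref{ReducibleBound} without first classifying all cases. What the paper's route buys in exchange is the explicit identification of which subrepresentation is responsible for the obstruction, which is the template reused in \Cref{Simple-Module}.
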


\begin{proof}
    Assume that $\mathcal{V}_{\text{Log}(\rho)} \cong \mathcal{O}(-3) \oplus \mathcal{O}(-1) \oplus \mathcal{O}(0)$. Following \Cref{Summary}, we can construct $\mathcal{N}(\rho)$ and, furthermore, by \Cref{ModuleProperties}, we know that the negatives of the roots are the generating weights of $\mathcal{N}(\rho)$. Now let $F, G$, and $H$ generate $\mathcal{N}(\rho)$ with weights $0,1,3$ respectively. 

    By construction, $D_{0}(F) \in \mathcal{N}_{1}(\rho)$, and so $D_{0}(F)$ has weight $1$. 
    Moreover, since $F,G$, and $H$ generate $\mathcal{N}(\rho)$, we must have $D_{0}(F) = \eta_{F} F + \tau_{F} G + \omega_{F} H$ for some $\eta_{F}, \tau_{F}, \omega_{F} \in \mathbb{C}[x,y]$. Since $H \in \mathcal{N}_{3}(\rho)$, then in fact $\omega_{F} = 0$. 
    Similarly, by construction, $D_{1}(G) \in \mathcal{N}_{2}(\rho)$, where $D_{1}(G) = \eta_{G} F + \tau_{G} G + \omega_{G} H$ for $\eta_{F}, \tau_{F}, \omega_{F} \in \mathbb{C}[x,y]$; but, as $H \in \mathcal{N}_{3}(\rho)$, then in fact $\omega_{G} = 0$. 

    It follows that $F$ and $G$ generate a submodule $L$ of $\mathcal{N}(\rho)$ that is closed under $D$. 
    Observe that $F|_{\mathbb{P}^{1}_{(0,1,\infty)}}$ and $G|_{\mathbb{P}^{1}_{(0,1,\infty)}}$ are nonzero global section of $\mathcal{Q}_{\rho}$ (see \Cref{Associated-Con}), since $\big(\mathcal{V}_{\text{Log}(\rho)} \otimes \mathcal{O}(\xi)\big)|_{\mathbb{P}^{1}_{(0,1,\infty)}} \cong \mathcal{Q}_{\rho}$.
    Denote with $\mathcal{E}_{L} \subset \mathcal{Q}_{\rho}$ the subsheaf generated by $L$ on $\mathbb{P}^{1}_{(0,1,\infty)}$.
    Now, denoting with $\varphi$ the restriction maps, as the following diagram 
    \begin{center}
    \begin{tikzcd}
    \mathcal{V}_{\text{Log}(\rho)} \otimes \mathcal{O}(\xi) \arrow[r, "\nabla_{\text{Log}(\rho)} \otimes \nabla"] \arrow[d, "\varphi"]
    & \mathcal{V}_{\text{Log}(\rho)} \otimes \mathcal{O}(\xi+1) \arrow[d, "\varphi"] \\
    \mathcal{Q}_{\rho} \arrow[r, "(\nabla_{\text{Log}(\rho)} \otimes \nabla)|_{\mathbb{P}^{1}_{(0,1,\infty)}}"]
    & \mathcal{Q}_{\rho}  
    \end{tikzcd}
    \end{center}
    commutes, then, $(DF)|_{\mathbb{P}^{1}_{(0,1,\infty)}} = D|_{\mathbb{P}^{1}_{(0,1,\infty)}}(F|_{\mathbb{P}^{1}_{(0,1,\infty)}})$ and $(DG)|_{\mathbb{P}^{1}_{(0,1,\infty)}} = D|_{\mathbb{P}^{1}_{(0,1,\infty)}}(G|_{\mathbb{P}^{1}_{(0,1,\infty)}})$.
    As every vector bundle over the affine space $\mathbb{P}^{1}_{(0,1,\infty)} = \text{Spec}(A)$ is free by \Cref{Free-sheaf}, $\mathcal{E}_{L}$ can be viewed as a free module of rank two over $A$ whose generators are stable under $D|_{\mathbb{P}^{1}_{(0,1,\infty)}} \cong \nabla_{\rho}$, then $\mathcal{E}_{L}$ inherits the connection map $\nabla_{\rho}|_{\mathcal{E}_{L}}$ in such a way where $(\mathcal{E}_{L},\nabla_{\rho}|_{\mathcal{E}_{L}}) \subset (\mathcal{Q}_{\rho}, \nabla_{\rho})$ as connections.

    Witness that since there is an equivalence of categories between holomorphic connections on $\mathbb{P}^{1}_{(0,1,\infty)}$ and monodromy representations of $\mathbb{P}^{1}_{(0,1,\infty)}$, then $\rho' \subset \rho$, where $\rho'$ corresponds to $(\mathcal{E}_{L}, \nabla_{\rho}|_{\mathcal{E}_{L}})$. 
    Examining $(\mathcal{E}_{L}, \nabla_{\rho}|_{\mathcal{E}_{L}})$, one sees that $(\mathcal{E}_{L}, \nabla_{\rho}|_{\mathcal{E}_{L}})$ is nonempty, which means that $\rho' \not= 0$; furthermore, as $\mathcal{N}(\rho)$ is of rank three, it follows that $(\mathcal{E}_{L}, \nabla_{\rho}|_{\mathcal{E}_{L}}) \subsetneqq (\mathcal{Q}_{\rho}, \nabla_{\rho})$, which forces $\rho' \subsetneqq \rho$.

    At this point, we have $0 \rightarrow \rho' \rightarrow \rho \rightarrow \chi \rightarrow 0$, which produces $0 \rightarrow \mathcal{V}_{\text{Log}(\rho')} \rightarrow \mathcal{V}_{\text{Log}(\rho)} \rightarrow \mathcal{V}_{\text{Log}(\chi)} \rightarrow 0$; as $\text{c}_{1}(\mathcal{V}_{\text{Log}(\rho')}) = -1$ and $\text{c}_{1}(\mathcal{V}_{\text{Log}(\rho)}) = -4$, then $\mathcal{V}_{\text{Log}(\chi)} \cong \mathcal{O}(-3)$. However, $\mathcal{V}_{\text{Log}(\chi)}$ is of rank one and degree $-3$, so it contradicts \cite[Proposition $4.1$]{DY25}.
\end{proof}

Now that we have collected all the required preliminary statements, we are in a position to decompose the associated extended bundles of any reducible three-dimensional monodromy representation of $\mathbb{P}^{1}_{(0,1,\infty)}$. We begin by considering such representations that admit a two-dimensional subrepresentation. 

\begin{Thm} \label{Reducible1}
    Suppose that $0 \rightarrow \rho' \rightarrow \rho \rightarrow \chi'' \rightarrow 0$ is a short exact sequence of monodromy representations of $\mathbb{P}^{1}_{(0, 1, \infty)}$ with $\rho$ three-dimensional, $\rho'$ two-dimensional, and  $\chi''$ one-dimensional. 
    With the principal branch as a choice of logarithm, let $0 \rightarrow \bigoplus_{i = 1}^{2}\mathcal{O}(\xi_{i}') \rightarrow \mathcal{V}_{\emph{Log}(\rho)} \rightarrow \mathcal{O}(\xi'') \rightarrow 0$ be the short exact sequence of the associated extended vector bundles. Arrange the roots of $\mathcal{V}_{\emph{Log}(\rho')}$ so that $\xi_{1}' \leq \xi_{2}'$. Then we have the following set of statements.
    \begin{enumerate}
        \item If $(\xi_{1}',\xi_{2}',\xi'') \not= (-2,-2,0),(-2,-1,0)$, nor $(-2,0,0)$, then the short exact sequence of associated extended bundles $0 \rightarrow \bigoplus_{i = 1}^{2}\mathcal{O}(\xi_{i}') \rightarrow \mathcal{V}_{Log(\rho)} \rightarrow \mathcal{O}(\xi'') \rightarrow 0$ splits.
        \item If $(\xi_{1}',\xi_{2}',\xi'') = (-2,-2,0)$, then the roots of $\mathcal{V}_{\emph{Log}(\rho)}$ are either $(-2,-1,-1) \text{or }(-2,-2,0)$.
        \item If $(\xi_{1}',\xi_{2}',\xi'') = (-2,-1,0)$, then the roots of $\mathcal{V}_{\emph{Log}(\rho)}$ are either $(-2, -1, 0)$ or $(-1, -1, -1)$.
        \item If $(\xi_{1}',\xi_{2}',\xi'') = (-2,0,0)$, then the roots of $\mathcal{V}_{\emph{Log}(\rho)}$ are either $(-2,0,0)$ or $(-1,-1,0)$.
    \end{enumerate}
\end{Thm}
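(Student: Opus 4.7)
The plan is a finite case analysis on $(\xi_1', \xi_2', \xi'')$. By \Cref{Root-Bound} applied to $\rho'$ and by \cite[Proposition $4.1$]{DY25} applied to the one-dimensional $\chi''$, each of $\xi_1', \xi_2', \xi''$ lies in $\{-2, -1, 0\}$. Combined with $\xi_1' \leq \xi_2'$ this leaves only finitely many triples, and for each one I analyze the short exact sequence of bundles via the extension group
\[
E := \text{Ext}^1(\mathcal{O}(\xi''), \mathcal{O}(\xi_1') \oplus \mathcal{O}(\xi_2')) \;\cong\; H^1(\mathcal{O}(\xi_1' - \xi'')) \oplus H^1(\mathcal{O}(\xi_2' - \xi'')).
\]

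Since $\xi_1' \leq \xi_2'$, the condition $E \neq 0$ is equivalent to $\xi_1' - \xi'' \leq -2$, which in our range forces $\xi'' = 0$ and $\xi_1' = -2$ and so picks out precisely the three exceptional triples $(-2,-2,0)$, $(-2,-1,0)$, $(-2,0,0)$. For any other triple, $E = 0$, the short exact sequence of associated extended bundles splits, and $\mathcal{V}_{\text{Log}(\rho)} \cong \mathcal{O}(\xi_1') \oplus \mathcal{O}(\xi_2') \oplus \mathcal{O}(\xi'')$; this is statement $(1)$.

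For each exceptional triple, there are at most two isomorphism types of middle term: the split one, and at most one non-split type. Uniqueness of the non-split type follows from transitivity of the natural $\text{Aut}(\mathcal{O}(\xi_1') \oplus \mathcal{O}(\xi_2'))$-action on the nonzero classes of $E$ (the $\text{GL}_2(\mathbb{C})$-action on $E \cong \mathbb{C}^2$ in case $(2)$; the $\mathbb{C}^*$-scaling coming from the automorphism of the $\mathcal{O}(-2)$ summand on $E \cong \mathbb{C}$ in cases $(3)$ and $(4)$). To identify the non-split type, I reduce the class to a normal form whose component in $H^1(\mathcal{O}(\xi_2' - \xi''))$ vanishes (this is automatic in cases $(3)$ and $(4)$ since that summand is already zero, and is achieved by the $\text{GL}_2$-action in case $(2)$). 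The resulting extension decomposes as $\mathcal{V}_{\text{Log}(\rho)} \cong \mathcal{W} \oplus \mathcal{O}(\xi_2')$, where $\mathcal{W}$ is the nontrivial rank-two extension $0 \to \mathcal{O}(-2) \to \mathcal{W} \to \mathcal{O}(0) \to 0$. By Grothendieck's splitting theorem, the only rank-two bundles of degree $-2$ with $\mathcal{O}(-2)$ as a line subbundle are $\mathcal{O}(-2) \oplus \mathcal{O}(0)$ (corresponding to the trivial extension) and $\mathcal{O}(-1)^{\oplus 2}$, so nontriviality forces $\mathcal{W} \cong \mathcal{O}(-1)^{\oplus 2}$. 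Reading off $\mathcal{O}(\xi_2')$ case by case yields the non-split triples $(-2,-1,-1)$, $(-1,-1,-1)$, $(-1,-1,0)$ stated in $(2)$, $(3)$, $(4)$ respectively.

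The main (minor) obstacle is establishing the transitivity of the automorphism action on the nonzero classes of $E$ so that the non-split middle term is unique up to isomorphism; once this is in hand, the identification of each nontrivial rank-two sub-extension as $\mathcal{O}(-1)^{\oplus 2}$ is a direct consequence of Grothendieck's splitting theorem together with the Künneth-style decomposition of the extension group.
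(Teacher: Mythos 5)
Your argument is correct, and for the exceptional cases it takes a genuinely different route from the paper. Part $(1)$ is essentially the paper's argument: both reduce splitting to the vanishing of $H^{1}(\mathcal{O}(\xi_{i}'-\xi''))$ via Serre duality and then use \Cref{Root-Bound} together with \cite[Proposition $4.1$]{DY25} to see that nonvanishing forces $(\xi_{1}',\xi'')=(-2,0)$. For cases $(2)$--$(4)$, however, the paper runs the long exact sequence in cohomology, pins down $\dim H^{0}$ and $\dim H^{1}$ of $\mathcal{V}_{\text{Log}(\rho)}$ by analyzing the connecting map, and in case $(2)$ must invoke \Cref{NonRoots} (whose proof uses the twisted module and the monodromy derivative) to exclude $\mathcal{O}(-3)\oplus\mathcal{O}(-1)\oplus\mathcal{O}(0)$, which has the same Betti numbers as $\mathcal{O}(-2)^{\oplus 2}\oplus\mathcal{O}$. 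You instead classify the possible middle terms directly: the $\mathrm{Aut}(\mathcal{O}(\xi_{1}')\oplus\mathcal{O}(\xi_{2}'))$-action is transitive on the nonzero classes of $E$ (the $\mathrm{GL}_{2}$-action on $\mathbb{C}^{2}$ in case $(2)$, scaling on $\mathbb{C}$ in cases $(3)$ and $(4)$), the normal form with vanishing second component splits off $\mathcal{O}(\xi_{2}')$ via the fiber-product description of the extension, and the unique nontrivial extension of $\mathcal{O}$ by $\mathcal{O}(-2)$ is $\mathcal{O}(-1)^{\oplus 2}$ (the twisted Euler sequence). This buys you two things: it bypasses \Cref{NonRoots} and the monodromy-derivative machinery entirely in this theorem, and it proves slightly more than the statement, namely that each of the two listed outcomes corresponds exactly to the sequence splitting or not. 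The paper's cohomological bookkeeping, by contrast, is more mechanical and is the template reused verbatim for \Cref{Reducible2}. The one point you should spell out is the uniqueness of the nontrivial rank-two extension type: that a surjection $\mathcal{O}(a)\oplus\mathcal{O}(b)\twoheadrightarrow\mathcal{O}$ with $a+b=-2$ forces $\{a,b\}=\{0,-2\}$ or $\{-1,-1\}$, and that in the former case the composite $\mathcal{O}\hookrightarrow W\twoheadrightarrow\mathcal{O}$ is an isomorphism and hence splits the sequence; but this is a routine verification and does not affect the validity of your proof.
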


\begin{proof}
    Using cohomology, 
    \begin{eqnarray}
    \nonumber H^{1}(\mathbb{P}^{1}, (\mathcal{O}(\xi'')^{*} \otimes \bigoplus_{i=1}^{2}\mathcal{O}(\xi_{i}')) && = H^{1}(\mathbb{P}^{1}, \mathcal{O}(-\xi'') \otimes \bigoplus_{i=1}^{2}\mathcal{O}(\xi_{i}')) \\
    \nonumber &&= H^{1}(\mathbb{P}^{1}, \bigoplus_{i=1}^{2}\mathcal{O}(-\xi''+\xi_{i}')).  
    \end{eqnarray}

    \noindent Further, by Serre-duality, we have
    \begin{eqnarray}
    \nonumber H^{1}(\mathbb{P}^{1}, \bigoplus_{i=1}^{2}\mathcal{O}(-\xi''+\xi_{i}')) &&= H^{0}(\mathbb{P}^{1}, (\bigoplus_{i=1}^{2}\mathcal{O}(-\xi''+\xi_{i}')^{*} \otimes \Omega_{\mathbb{P}^{1}}^{1}))^{*} \\
    \nonumber &&= H^{0}(\mathbb{P}^{1}, \bigoplus_{i=1}^{2}\mathcal{O}(\xi''-\xi_{i}'-2))^{*}.
    \end{eqnarray}

    \noindent Hence, if 
    \begin{eqnarray} \label{Inequalities}
        \xi'' - \xi_{1}' < 2 \text{ and } \xi'' - \xi_{2}' < 2,
    \end{eqnarray}
    then $0 \rightarrow \bigoplus_{i = 1}^{2}\mathcal{O}(\xi_{i}') \rightarrow \mathcal{V}_{\text{Log}(\rho)} \rightarrow \mathcal{O}(\xi'') \rightarrow 0$ splits.

    For $(1)$, by \cite[Proposition $4.1$]{DY25}, we know that $0 \geq \xi'' \geq -2$, and, moreover, by \Cref{Root-Bound}, we have $0 \geq \xi_{i}' \geq -2$ for $i=1,2$. Consequently, the inequalities (\ref{Inequalities}) are not satisfied only when $\xi'' = 0$ and at least one of the $\xi_{i}' = -2$. However, without loss of generality, if $\xi_{1}' = -2$, then as a consequence of \cite[Theorem $5.9$]{DY25}, $\xi_{2}' = -2$ or $-1$ for the case when $\rho'$ is irreducible; whereas when $\rho'$ is reducible, as consequence of \cite[Theorem $4.4$]{DY25} we also see that $\xi_{2}'$ can be $0$. This establishes $(1)$. 

    Regarding the remaining three cases, we resort to studying the long exact sequence in cohomology. We have 
    \begin{eqnarray}
    \nonumber
    && 0 \rightarrow H^{0}(\mathbb{P}^{1}, \bigoplus_{i=1}^{2}\mathcal{O}(\xi_{i}')) \rightarrow H^{0}(\mathbb{P}^{1}, \mathcal{V}_{\text{Log}(\rho)}) \rightarrow H^{0}(\mathbb{P}^{1}, \mathcal{O}(\xi'')) \xrightarrow{\partial} H^{1}(\mathbb{P}^{1}, \bigoplus_{i=1}^{2}\mathcal{O}(\xi_{i}')) \\ 
    \nonumber
    &&\xrightarrow{\delta} H^{1}(\mathbb{P}^{1}, \mathcal{V}_{\text{Log}(\rho)}) \rightarrow H^{1}(\mathbb{P}^{1}, \mathcal{O}(\xi'')) \cdots
    \end{eqnarray}

    For $(2)$, $H^{0}(\mathbb{P}^{1}, \mathcal{O}(-2)^{\oplus 2})$ and $H^{1}(\mathbb{P}^{1}, \mathcal{O}(0))$ are zero-dimensional complex vector spaces while $H^{0}(\mathbb{P}^{1}, \mathcal{O}(0))$ is one-dimensional and $H^{1}(\mathbb{P}^{1}, \mathcal{O}(-2)^{\oplus 2})$ is two-dimensional.  
    This means that $\partial$ is either the $0$ map or injective. If $\partial = 0$, then $\text{dim }H^{0}(\mathbb{P}^{1}, \mathcal{V}_{\text{Log}(\rho)}) = 1$, forcing one of the roots to be $0$ with the other two being negative integers. 
    Moreover, the $\delta$ map has to be an isomorphism, implying that $\text{dim }H^{1}(\mathbb{P}^{1}, \mathcal{V}_{\text{Log}(\rho)}) = 2$. 
    Hence, either $\mathcal{V}_{Log(\rho)} \cong \mathcal{O}(-2) \oplus \mathcal{O}(-2) \oplus \mathcal{O}(0)$ or $\mathcal{V}_{\text{Log}(\rho)} \cong \mathcal{O}(-3) \oplus \mathcal{O}(-1) \oplus \mathcal{O}(0)$. However, the latter case cannot occur by \Cref{NonRoots}. On the other hand, if $\partial$ is injective, then $\text{dim }H^{0}(\mathbb{P}^{1}, \mathcal{V}_{\text{Log}(\rho)}) = 0$, forcing all the roots to be negative. 
    Additionally, $\delta$ has to be surjective, since $\text{dim }H^{1}(\mathbb{P}^{1}, \mathcal{O}(0)) = 0$, but this coerces $\text{dim }H^{1}(\mathbb{P}^{1}, \mathcal{V}_{\text{Log}(\rho)}) = 1$. Thus, $\mathcal{V}_{\text{Log}(\rho)} \cong \mathcal{O}(-2) \oplus \mathcal{O}(-1) \oplus \mathcal{O}(-1)$.

    For $(3)$, $H^{0}(\mathbb{P}^{1}, \mathcal{O}(-2) \oplus \mathcal{O}(-1))$ and $H^{1}(\mathbb{P}^{1}, \mathcal{O}(0))$ are zero-dimensional complex vector spaces while $H^{0}(\mathbb{P}^{1}, \mathcal{O}(0))$ and $H^{1}(\mathbb{P}^{1}, \mathcal{O}(-2) \oplus \mathcal{O}(-1))$ are one-dimensional. 
    It follows that $\partial$ is either the $0$ map or an isomorphism. If $\partial = 0$, then $\text{dim }H^{0}(\mathbb{P}^{1}, \mathcal{V}_{\text{Log}(\rho)}) = 1$, forcing one of the roots to be $0$ with the other two being negative integers. Furthermore, the $\delta$ map has to be an isomorphism, implying that $\text{dim }H^{1}(\mathbb{P}^{1}, \mathcal{V}_{\text{Log}(\rho)}) = 1$.
    Thus, $\mathcal{V}_{\text{Log}(\rho)} \cong \mathcal{O}(-2) \oplus \mathcal{O}(-1) \oplus \mathcal{O}(0)$.
    Now, if $\partial$ is an isomorphism, then $\text{dim }H^{0}(\mathbb{P}^{1}, \mathcal{V}_{\text{Log}(\rho)}) = 0$ and $\text{dim }H^{1}(\mathbb{P}^{1}, \mathcal{V}_{\text{Log}(\rho)}) = 0$, coercing $\mathcal{V}_{\text{Log}(\rho)} \cong \mathcal{O}(-1)^{\oplus 3}$. 

    For $(4)$, we have that $H^{0}(\mathbb{P}^{1}, \mathcal{O}(-2) \oplus \mathcal{O}(0))$, $H^{0}(\mathbb{P}^{1}, \mathcal{O}(0))$, and $H^{1}(\mathbb{P}^{1}, \mathcal{O}(-2) \oplus \mathcal{O}(0))$ are one-dimensional complex vector spaces while $\text{dim }H^{1}(\mathbb{P}^{1}, \mathcal{O}(0))$ = $0$.
    It follows that $\partial$ is either the $0$ map or an isomorphism. If $\partial = 0$, then $H^{0}(\mathbb{P}^{1},\mathcal{V}_{\text{Log}(\rho)}) \cong \mathbb{C}^{2}$, forcing two of the roots to be $0$ while the remaining one is a negative integer. We must also have that $\delta$ is an isomorphism, which shows $H^{1}(\mathbb{P}^{1},\mathcal{V}_{\text{Log}(\rho)})$ is one-dimensional, and hence, $\mathcal{V}_{\text{Log}(\rho)} \cong \mathcal{O}(-2) \oplus \mathcal{O}(0)^{\oplus 2}$.
    On the other hand, if $\partial$ is an isomorphism, then $H^{0}(\mathbb{P}^{1},\mathcal{V}_{\text{Log}(\rho)}) \cong \mathbb{C}$, which implies that one of the roots is $0$ while the remaining two are negative. In addition, we know that $\delta$ is the $0$ map, so $H^{1}(\mathbb{P}^{1},\mathcal{V}_{\text{Log}(\rho)})$ is zero-dimensional. Thus, $\mathcal{V}_{\text{Log}(\rho)} \cong \mathcal{O}(-1)^{\oplus 2} \oplus \mathcal{O}(0)$.
\end{proof}

\begin{Thm} \label{Reducible2}
    Suppose that $0 \rightarrow \chi' \rightarrow \rho \rightarrow \rho'' \rightarrow 0$ is a short exact sequence of monodromy representations of $\mathbb{P}^{1}_{(0, 1, \infty)}$ with $\rho$ three-dimensional, $\chi'$ one-dimensional, and  $\rho''$ two-dimensional. 
    With the principal branch as a choice of logarithm, let $0 \rightarrow \mathcal{O}(\xi') \rightarrow \mathcal{V}_{\emph{Log}(\rho)} \rightarrow \bigoplus_{i = 1}^{2}\mathcal{O}(\xi_{i}'') \rightarrow 0$ be the short exact sequence of the associated extended bundles. Arrange the roots of $\mathcal{V}_{\emph{Log}(\rho'')}$ so that $\xi_{1}'' \leq \xi_{2}''$. Then we have the following set of statements.
    \begin{enumerate}
        \item If $(\xi', \xi_{1}'', \xi_{2}'') \not= (-2,-2,0),(-2,-1,0)$, nor $(-2,0,0)$, then the short exact sequence of associated extended vector bundles $0 \rightarrow \mathcal{O}(\xi') \rightarrow \mathcal{V}_{\emph{Log}(\rho)} \rightarrow \bigoplus_{i = 1}^{2}\mathcal{O}(\xi_{i}'') \rightarrow 0$ splits.
        \item If $(\xi', \xi_{1}'', \xi_{2}'') = (-2,-2,0)$, then the roots of $\mathcal{V}_{\emph{Log}(\rho)}$ are either $(-2,-1,-1), \text{or } (-2,-2,0)$.
        \item If $(\xi', \xi_{1}'', \xi_{2}'') = (-2,-1,0)$, then the roots of $\mathcal{V}_{\emph{Log}(\rho)}$ are either $(-2, -1, 0)$ or $(-1, -1, -1)$.
        \item If $(\xi', \xi_{1}'', \xi_{2}'') = (-2,0,0)$, then the roots of $\mathcal{V}_{\emph{Log}(\rho)}$ are either $(-2,0,0)$ or $(-1,-1,0)$.
    \end{enumerate}
\end{Thm}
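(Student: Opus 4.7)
The proof of \Cref{Reducible2} should be essentially a mirror image of the proof of \Cref{Reducible1}; the roles of the subrepresentation and the quotient are interchanged, but the underlying cohomological bookkeeping is identical. The plan is to compute the obstruction to splitting using Serre duality, identify the finite list of triples where splitting can fail, and then, for each such triple, analyze the long exact sequence of cohomology case by case on the connecting homomorphism.

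First I would set up the splitting criterion. Extensions of $\bigoplus_{i=1}^{2}\mathcal{O}(\xi_{i}'')$ by $\mathcal{O}(\xi')$ are classified by
\begin{eqnarray}
\nonumber
H^{1}\Bigl(\mathbb{P}^{1}, \mathcal{H}om\bigl(\bigoplus_{i=1}^{2}\mathcal{O}(\xi_{i}''), \mathcal{O}(\xi')\bigr)\Bigr) = H^{1}\Bigl(\mathbb{P}^{1}, \bigoplus_{i=1}^{2}\mathcal{O}(\xi'-\xi_{i}'')\Bigr),
\end{eqnarray}
which by Serre duality equals $H^{0}(\mathbb{P}^{1},\bigoplus_{i=1}^{2}\mathcal{O}(\xi_{i}''-\xi'-2))^{*}$. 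This vanishes, and hence the sequence splits, whenever $\xi_{i}'' - \xi' < 2$ for $i = 1, 2$. Combining \cite[Proposition $4.1$]{DY25} (which gives $0 \geq \xi' \geq -2$) with \Cref{Root-Bound} (which gives $0 \geq \xi_{i}'' \geq -2$), the inequality can fail only when $\xi' = -2$ and at least one $\xi_{i}''$ equals $0$. Invoking \cite[Theorem $5.9$]{DY25} and \cite[Theorem $4.4$]{DY25} for $\rho''$ then forces the remaining root to lie in $\{-2,-1,0\}$, which enumerates exactly the three exceptional triples listed in the statement. This proves $(1)$.

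For parts $(2)$--$(4)$, I would run the long exact sequence
\begin{eqnarray}
\nonumber
0 \rightarrow H^{0}(\mathcal{O}(\xi')) \rightarrow H^{0}(\mathcal{V}_{\text{Log}(\rho)}) \rightarrow H^{0}\Bigl(\bigoplus_{i=1}^{2}\mathcal{O}(\xi_{i}'')\Bigr) \xrightarrow{\partial} H^{1}(\mathcal{O}(\xi')) \rightarrow H^{1}(\mathcal{V}_{\text{Log}(\rho)}) \rightarrow H^{1}\Bigl(\bigoplus_{i=1}^{2}\mathcal{O}(\xi_{i}'')\Bigr) \rightarrow 0
\end{eqnarray}
in each case, exactly as in the proof of \Cref{Reducible1}. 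The dimensions of the outer cohomology groups are determined from the standard formula for line bundles on $\mathbb{P}^{1}$; the first Chern class of $\mathcal{V}_{\text{Log}(\rho)}$, namely $\xi' + \xi_{1}'' + \xi_{2}''$, together with the computed dimensions $h^{0}(\mathcal{V}_{\text{Log}(\rho)})$ and $h^{1}(\mathcal{V}_{\text{Log}(\rho)})$, pins down the possible splitting types. For the triple $(-2,-2,0)$ the map $\partial \colon \mathbb{C} \to \mathbb{C}$ is either zero or an isomorphism, yielding either $\mathcal{O}(-2)^{\oplus 2}\oplus\mathcal{O}(0)$ or $\mathcal{O}(-2)\oplus\mathcal{O}(-1)^{\oplus 2}$; for $(-2,-1,0)$ one gets $\mathcal{O}(-2)\oplus\mathcal{O}(-1)\oplus\mathcal{O}(0)$ or $\mathcal{O}(-1)^{\oplus 3}$; and for $(-2,0,0)$ one gets $\mathcal{O}(-2)\oplus\mathcal{O}(0)^{\oplus 2}$ or $\mathcal{O}(-1)^{\oplus 2}\oplus\mathcal{O}(0)$.

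The only subtlety arises in case $(2)$: the numerical constraints $c_{1} = -4$, $h^{0} = 1$, $h^{1} = 2$ are also compatible with the splitting type $\mathcal{O}(-3)\oplus\mathcal{O}(-1)\oplus\mathcal{O}(0)$, so this spurious possibility must be excluded. This is precisely where I would invoke \Cref{NonRoots}, which rules out that decomposition for any three-dimensional reducible $\rho$ on $\mathbb{P}^{1}_{(0,1,\infty)}$. I expect this step to be the only genuine obstacle; once it is handled, the remaining cases are bookkeeping on the long exact sequence entirely parallel to what was done in \Cref{Reducible1}.
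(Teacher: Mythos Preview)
Your proposal is correct and follows essentially the same approach as the paper's proof: the Ext/Serre duality computation, the invocation of \cite[Proposition $4.1$]{DY25}, \Cref{Root-Bound}, \cite[Theorem $5.9$]{DY25}, and \cite[Theorem $4.4$]{DY25} to isolate the three exceptional triples, the case-by-case analysis of the connecting map $\partial$ in the long exact sequence, and the appeal to \Cref{NonRoots} to exclude $\mathcal{O}(-3)\oplus\mathcal{O}(-1)\oplus\mathcal{O}(0)$ in case $(2)$ all match the paper exactly.
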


\begin{proof}
    The proof is nearly identical to the proof of \Cref{Reducible1}. We explicitly state it below with the appropriate modifications for thoroughness. 
    Using cohomology,
    \begin{eqnarray}
    \nonumber H^{1}(\mathbb{P}^{1}, (\bigoplus\limits_{i = 1}^{2}\mathcal{O}(\xi_{i}'')^{*} \otimes \mathcal{O}(\xi')) && = H^{1}(\bigoplus\limits_{i = 1}^{2}\mathcal{O}(-\xi_{i}'') \otimes \mathcal{O}(\xi')) \\
    \nonumber &&= H^{1}(\mathbb{P}^{1}, \bigoplus_{i=1}^{2}\mathcal{O}(-\xi_{i}''+\xi')).  
    \end{eqnarray}

    \noindent Further, by Serre-duality we have
    \begin{eqnarray}
    \nonumber H^{1}(\mathbb{P}^{1}, \bigoplus_{i=1}^{2}\mathcal{O}(-\xi_{i}''+\xi')) &&= H^{0}(\mathbb{P}^{1}, (\bigoplus_{i=1}^{2}\mathcal{O}(-\xi_{i}''+\xi')^{*} \otimes \Omega_{\mathbb{P}^{1}}^{1}))^{*} \\
    \nonumber &&= H^{0}(\mathbb{P}^{1}, \bigoplus_{i=1}^{2}\mathcal{O}(\xi''-\xi_{i}'-2))^{*}.
    \end{eqnarray}

    \noindent Hence, if 
    \begin{eqnarray} \label{Inequalities2}
        \xi_{1}'' - \xi' < 2 \text{ and } \xi_{2}'' - \xi' < 2,
    \end{eqnarray}
    then $0 \rightarrow \mathcal{O}(\xi') \rightarrow \mathcal{V}_{\text{Log}(\rho)} \rightarrow \bigoplus_{i = 1}^{2}\mathcal{O}(\xi_{i}'') \rightarrow 0$ splits.

    For $(1)$, by \cite[Proposition $4.1$]{DY25}, we know that $0 \geq \xi' \geq -2$, and, moreover, by \Cref{Root-Bound}, we have $0 \geq \xi_{i}'' \geq -2$ for $i=1,2$. Consequently, the inequalities (\ref{Inequalities2}) are not satisfied only when $\xi' = -2$ and at least one of the $\xi_{i}'' = 0$. However, without loss of generality, if $\xi_{2}'' = 0$, then as a consequence of \cite[Theorem $5.9$]{DY25}, $\xi_{1}' = -1$ or $0$ for the case when $\rho'$ is irreducible; whereas when $\rho'$ is reducible, as consequence of \cite[Theorem $4.4$]{DY25}, we also see that $\xi_{1}'$ can be $-2$. This establishes $(1)$. 

    Regarding the remaining three cases, we resort to studying the long exact sequence in cohomology. We have 
    \begin{eqnarray}
    \nonumber
    && 0 \rightarrow H^{0}(\mathbb{P}^{1}, \mathcal{O}(\xi')) \rightarrow H^{0}(\mathbb{P}^{1}, \mathcal{V}_{\text{Log}(\rho)}) \rightarrow H^{0}(\mathbb{P}^{1}, \bigoplus_{i=1}^{2}\mathcal{O}(\xi_{i}'')) \xrightarrow{\partial} H^{1}(\mathbb{P}^{1}, \mathcal{O}(\xi')) \\ 
    \nonumber
    &&\xrightarrow{\delta} H^{1}(\mathbb{P}^{1}, \mathcal{V}_{\text{Log}(\rho)}) \rightarrow H^{1}(\mathbb{P}^{1}, \bigoplus_{i=1}^{2}\mathcal{O}(\xi_{i}'')) \cdots
    \end{eqnarray}

    For $(2)$, $H^{0}(\mathbb{P}^{1}, \mathcal{O}(-2) \oplus \mathcal{O}(0)), H^{1}(\mathbb{P}^{1}, \mathcal{O}(-2))$ and $H^{1}(\mathbb{P}^{1}, \mathcal{O}(-2) \oplus \mathcal{O}(0))$ are one-dimensional complex vector spaces while $H^{0}(\mathbb{P}^{1}, \mathcal{O}(-2))$ is zero-dimensional.  
    This means that $\partial$ is either the $0$ map or an isomorphism. If $\partial = 0$, then $\text{dim }H^{0}(\mathbb{P}^{1}, \mathcal{V}_{\text{Log}(\rho)}) = 1$, forcing one of the roots to be $0$ with the other two being negative integers. 
    Moreover, the $\delta$ map has to be injective, implying that $\text{dim }H^{1}(\mathbb{P}^{1}, \mathcal{V}_{\text{Log}(\rho)}) = 2$. 
    Hence, either $\mathcal{V}_{Log(\rho)} \cong \mathcal{O}(-2) \oplus \mathcal{O}(-2) \oplus \mathcal{O}(0)$ or $\mathcal{V}_{\text{Log}(\rho)} \cong \mathcal{O}(-3) \oplus \mathcal{O}(-1) \oplus \mathcal{O}(0)$. However, the latter case cannot occur by \Cref{NonRoots}. On the other hand, if $\partial$ is an isomorphism, then $\text{dim }H^{0}(\mathbb{P}^{1}, \mathcal{V}_{\text{Log}(\rho)}) = 0$, forcing all the roots to be negative. 
    Additionally, $\text{dim }H^{1}(\mathbb{P}^{1}, \mathcal{O}(-2) \oplus \mathcal{O}(0)) = 1$ coerces $\text{dim }H^{1}(\mathbb{P}^{1}, \mathcal{V}_{\text{Log}(\rho)}) = 1$. Thus, $\mathcal{V}_{\text{Log}(\rho)} \cong \mathcal{O}(-2) \oplus \mathcal{O}(-1) \oplus \mathcal{O}(-1)$.

    For $(3)$, $H^{0}(\mathbb{P}^{1}, \mathcal{O}(-1) \oplus \mathcal{O}(0))$ and $H^{1}(\mathbb{P}^{1}, \mathcal{O}(-2))$ are one-dimensional complex vector spaces while $H^{0}(\mathbb{P}^{1}, \mathcal{O}(-2))$ and $H^{1}(\mathbb{P}^{1}, \mathcal{O}(-1) \oplus \mathcal{O}(0))$ are zero-dimensional. 
    It follows that $\partial$ is either the $0$ map or an isomorphism. If $\partial = 0$, then $\text{dim }H^{0}(\mathbb{P}^{1}, \mathcal{V}_{\text{Log}(\rho)}) = 1$, forcing one of the roots to be $0$ with the other two being negative integers. Furthermore, the $\delta$ map has to be an isomorphism, implying that $\text{dim }H^{1}(\mathbb{P}^{1}, \mathcal{V}_{\text{Log}(\rho)}) = 1$.
    Thus, $\mathcal{V}_{\text{Log}(\rho)} \cong \mathcal{O}(-2) \oplus \mathcal{O}(-1) \oplus \mathcal{O}(0)$.
    Now, if $\partial$ is an isomorphism, then $\text{dim }H^{0}(\mathbb{P}^{1}, \mathcal{V}_{\text{Log}(\rho)}) = 0$ and $\text{dim }H^{1}(\mathbb{P}^{1}, \mathcal{V}_{\text{Log}(\rho)}) = 0$, coercing $\mathcal{V}_{\text{Log}(\rho)} \cong \mathcal{O}(-1)^{\oplus 3}$.

    For $(4)$, we have that $H^{0}(\mathbb{P}^{1}, \mathcal{O}(-2)), H^{1}(\mathbb{P}^{1}, \mathcal{O}(-2))$ are zero-dimensional and one-dimensional complex vector spaces, respectively.
    Additionally, we know that $H^{0}(\mathbb{P}^{1}, \mathcal{O}(0) \oplus \mathcal{O}(0))$ is a two-dimensional complex vector space. 
    It follows that $\partial$ is either the $0$ map or surjective. If $\partial = 0$, then $H^{0}(\mathbb{P}^{1},\mathcal{V}_{\text{Log}(\rho)}) \cong \mathbb{C}^{2}$, forcing two of the roots to be $0$ while the remaining one is a negative integer. We must also have that $\delta$ is an isomorphism, which shows $H^{1}(\mathbb{P}^{1},\mathcal{V}_{\text{Log}(\rho)})$ is one-dimensional, and hence, $\mathcal{V}_{\text{Log}(\rho)} \cong \mathcal{O}(-2) \oplus \mathcal{O}(0)^{\oplus 2}$.
    On the other hand, if $\partial$ is surjective, then $H^{0}(\mathbb{P}^{1},\mathcal{V}_{\text{Log}(\rho)}) \cong \mathbb{C}$, which implies that one of the roots is $0$ while the remaining two are negative. In addition, we know that $\delta$ is the $0$ map, so $H^{1}(\mathbb{P}^{1},\mathcal{V}_{\text{Log}(\rho)})$ is zero-dimensional. Thus, $\mathcal{V}_{\text{Log}(\rho)} \cong \mathcal{O}(-1)^{\oplus 2} \oplus \mathcal{O}(0)$.
\end{proof}

\begin{Rem}
    To conclude this section, we state a direct corollary from the previous two theorems that will give further credence to the statements posed in \Cref{Expectations}. 
\end{Rem}

\begin{Cor}\label{ReducibleBound}
    Let $\rho: \mathbb{Z}*\mathbb{Z} \rightarrow \text{GL}_{3}(\mathbb{C})$ be a reducible monodromy representation of $\mathbb{P}^{1}_{(0, 1, \infty)}$. Then the roots of $\mathcal{V}_{\text{Log}(\rho)} \cong \bigoplus\limits_{i=1}^{3}\mathcal{O}(\xi_{i})$ are bounded: 
    \begin{eqnarray}
        \nonumber
        0 \geq \xi_{i} >-3 
    \end{eqnarray}
    for each $1 \leq i \leq 3$. 
\end{Cor}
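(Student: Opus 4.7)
The plan is to leverage the dichotomy provided by \Cref{Reducible1} and \Cref{Reducible2}: any reducible three-dimensional $\rho$ fits into either a short exact sequence with a two-dimensional subrepresentation and one-dimensional quotient, or the reverse, and in each case the possible roots of $\mathcal{V}_{\text{Log}(\rho)}$ have been explicitly enumerated. Thus the proof reduces to inspecting the list of outcomes in both theorems and verifying that every triple that can occur lies in $\{-2,-1,0\}^{3}$, which is precisely the conclusion $0 \geq \xi_{i} > -3$.

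First, I would split into the two structural cases. In the setting of \Cref{Reducible1}, arrange the roots as $\xi_{1}' \leq \xi_{2}'$ of $\mathcal{V}_{\text{Log}(\rho')}$ and $\xi''$ of $\mathcal{V}_{\text{Log}(\chi'')}$. If $(\xi_{1}',\xi_{2}',\xi'')$ is not one of the three exceptional triples, the sequence of extended bundles splits, so the roots of $\mathcal{V}_{\text{Log}(\rho)}$ are precisely $\xi_{1}',\xi_{2}',\xi''$; by \Cref{Root-Bound} we have $0 \geq \xi_{1}',\xi_{2}' \geq -2$, and by \cite[Proposition $4.1$]{DY25} we have $0 \geq \xi'' \geq -2$, so each root lies in $[-2,0]$. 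Otherwise, $(\xi_{1}',\xi_{2}',\xi'')$ is one of the three exceptional triples and the possible root triples of $\mathcal{V}_{\text{Log}(\rho)}$ are listed explicitly: $(-2,-1,-1)$, $(-2,-2,0)$, $(-2,-1,0)$, $(-1,-1,-1)$, or $(-1,-1,0)$. Every entry of every such triple lies in $\{-2,-1,0\}$, confirming the bound.

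Next, I would repeat the identical inspection for \Cref{Reducible2}, where the subrepresentation is one-dimensional and the quotient is two-dimensional; the enumeration produces exactly the same list of possible root triples for $\mathcal{V}_{\text{Log}(\rho)}$, and the same bounds from \Cref{Root-Bound} and \cite[Proposition $4.1$]{DY25} apply to the building blocks in the splitting case. Since every reducible three-dimensional $\rho$ falls into one of these two cases, combining the two inspections yields $-2 \leq \xi_{i} \leq 0$ uniformly.

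The only potential obstacle, which is already handled in the cited theorems, is the triple $(-3,-1,0)$ that arises naturally from the cohomology computation in the $(-2,-2,0)$ exceptional branch; this would violate the bound $\xi_{i}>-3$. The obstacle is removed by \Cref{NonRoots}, which rules out that decomposition precisely through the monodromy-derivative argument. Once this exclusion is invoked inside the $(-2,-2,0)$ branches of both \Cref{Reducible1} and \Cref{Reducible2}, no triple with an entry $\leq -3$ remains, and the corollary follows.
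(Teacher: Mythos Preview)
Your proposal is correct and follows precisely the paper's approach: the paper's proof is the single line ``Combine \Cref{Reducible1} with \Cref{Reducible2},'' and you have simply spelled out that combination by running through the split and exceptional cases. One tiny slip: in listing the exceptional outcomes you omitted the triple $(-2,0,0)$ from case~(4), but since its entries also lie in $\{-2,-1,0\}$ this does not affect the argument.
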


\begin{proof}
    Combine \Cref{Reducible1} with \Cref{Reducible2}.
\end{proof}

\section{Example from the Projective Modular Group}
In \cite[Section $6$]{DY25} we present an example of computing the roots of an irreducible monodromy representation. Here we will complement that example by taking a reducible but indecomposable representation. 

Classification of such representations for $\text{PSL}_{2}(\mathbb{Z})$ have been established in \cite{CHMY2018}; we take a reducible but indecomposable three-dimensional monodromy representation $\rho$ of $\mathbb{P}^{1}_{(0,1,\infty)}$ that factors through $\text{PSL}_{2}(\mathbb{Z})$ from \cite[Table $1$]{CHMY2018}. 

We generate the modular group $\text{SL}_{2}(\mathbb{Z})$ by 
\begin{eqnarray}
    S = 
    \begin{pmatrix}
        0 && -1 \\
        1 && 0 
    \end{pmatrix}, \text{  }
    T = 
    \begin{pmatrix}
        1 && 1 \\
        0 && 1 
    \end{pmatrix},
\end{eqnarray}
subject to the relations $S^{2} = (ST)^{3}$ and $S^{4} = 1$.
We will continue to use the notation $S$ and $T$ to refer to the images of these matrices in the quotient group $\text{PSL}_{2}(\mathbb{Z}) = \text{SL}_{2}{\mathbb{Z}} / \{\pm 1\}$. 
Therefore, a function $\varrho: \text{PSL}_{2}(\mathbb{Z}) \rightarrow \text{GL}_{3}(\mathbb{C})$ defines a matrix representation of $\text{PSL}_{2}(\mathbb{Z})$ if and only if $\varrho(S)^{2} = \varrho(ST)^{3} = 1$. Additionally, following \cite{CHMY2018}, we assume that $\varrho(T)$ is diagonalizable, and that a basis has been chosen so that
\begin{eqnarray}
    \varrho(T) = \text{diag}\{\lambda_{1}, \lambda_{2}, \lambda_{3}\}.
\end{eqnarray}
Define a map $\phi: \mathbb{Z}*\mathbb{Z} \rightarrow \text{PSL}_{2}(\mathbb{Z})$ by $\gamma_{0} \mapsto S$ and $\gamma_{1} \mapsto T$, and compose it with $\varrho$ so that 
\begin{center}
    \begin{tikzcd}
        \mathbb{Z}*\mathbb{Z} \arrow[rd, "\rho"] \arrow[r, "\phi"] & \text{PSL}_{2}(\mathbb{Z}) \arrow[d, "\varrho"] \\
        & \text{GL}_{3}(\mathbb{C})
    \end{tikzcd}
\end{center}
the composition $\rho$ is a monodromy representation of $\mathbb{P}^{1}_{(0,1,\infty)}$. 

It follows from \cite[Table $1$]{CHMY2018} that 
\begin{eqnarray}\label{ReducibleExample}
    \rho(\gamma_{0}) =
    \begin{pmatrix}
        1 & 0 & 0 \\
        0 & e^{2\pi i (\frac{5}{6})} & 0 \\
        0 & 0 & e^{2\pi i (\frac{1}{6})}
    \end{pmatrix} \text{ and }
    \rho(\gamma_{1}) =
    \begin{pmatrix}
        1 & 1 & 1 \\
        0 & -1 & 0 \\
        0 & 0 & -1
    \end{pmatrix}
\end{eqnarray}
defines a reducible but indecomposable three-dimensional monodromy representation of $\mathbb{P}^{1}_{(0,1,\infty)}$ that factors through $\text{PSL}_{2}(\mathbb{Z})$. 
Now let $\rho'$ be the subrepresentation defined by the upper $2 \times 2$ blocks. Since $\rho'$ is reducible, we have the short exact sequence 
\begin{eqnarray}
    0 \rightarrow \chi_{1} \rightarrow \rho' \rightarrow \chi_{2} \rightarrow 0
\end{eqnarray}
with $\chi_{1}, \chi_{2}$ characters representing the order pairs $(1,1), (e^{2\pi i (\frac{5}{6})}, -1)$ respectively. 

From \cite[Proposition $4.1$]{DY25}, we know that $\mathcal{V}_{\text{Log}(\chi_{1})} \cong \mathcal{O}(0)$ and $\mathcal{V}_{\text{Log}(\chi_{2})} \cong \mathcal{O}(-2)$. At this point, we apply \cite[Theorem $4.4(1)$]{DY25} to see that $\mathcal{V}_{\text{Log}(\rho')} \cong \mathcal{O}(-2) \oplus \mathcal{O}(0)$. From Equation (\ref{ReducibleExample}) we started with the short exact sequence
\begin{eqnarray}
    0 \rightarrow \rho' \rightarrow \rho \rightarrow \chi'' \rightarrow 0
\end{eqnarray}
where we now know that the corresponding short exact sequence of associated extended bundles is 
\begin{eqnarray}
    0 \rightarrow \mathcal{O}(-2) \oplus \mathcal{O}(0) \rightarrow \mathcal{V}_{\text{Log}(\rho)} \rightarrow \mathcal{V}_{\text{Log}(\chi'')} \rightarrow 0 
\end{eqnarray}

In order to apply \Cref{Reducible1}, we need to know the root of $\mathcal{V}_{\text{Log}(\chi'')}$. However, from \cite[Proposition $4.1$]{DY25} we see that $\mathcal{V}_{\text{Log}(\chi'')} \cong \mathcal{O}(-1)$. Thus, as a consequence of \Cref{Reducible1}$(1)$, the short exact sequence 
\begin{eqnarray}
    0 \rightarrow \mathcal{O}(-2) \oplus \mathcal{O}(0) \rightarrow \mathcal{V}_{\text{Log}(\rho)} \rightarrow \mathcal{O}(-1) \rightarrow 0. 
\end{eqnarray}
splits, showing that the roots of $\rho$ are $(-2,0,-1)$.

\section{Irreducible Monodromy Representations} \label{IrreducibleSection}
The goal of this section is to compute the roots of any vector bundle arising from an irreducible three-dimensional monodromy representation of $\mathbb{P}^{1}_{(0, 1, \infty)}$. The key force in the main theorem of this section is the following lemma which generalizes \cite[Lemma $5.8$]{DY25}. 

\begin{Lm} \label{Simple-Module}
    Let $\rho$ be an $n$-dimensional monodromy representation of $\mathbb{P}^{1}_{(p_{1}, ..., p_{m})}$ with $m,n \geq 2$. If $\rho$ is irreducible, then no proper submodule of $\mathcal{N}(\rho)$ is closed under the monodromy derivative $D$, i.e., $\mathcal{N}(\rho)$ is a simple as a $D$-module. 
\end{Lm}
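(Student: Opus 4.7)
The plan is to argue by contradiction using the descent procedure from the proof of \Cref{NonRoots}: from a nonzero proper $D$-closed submodule of $\mathcal{N}(\rho)$, I will extract a proper $\nabla_\rho$-stable subsheaf of $\mathcal{Q}_\rho$ over $\mathbb{P}^1_{(p_1,\ldots,p_m)}$, and then invoke \Cref{Equiv-cat} to produce a proper subrepresentation of $\rho$, contradicting irreducibility.

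First, I would fix homogeneous $R$-generators $F_1,\ldots,F_k$ of $L$, with $F_i \in H^0(\mathbb{P}^1, \mathcal{V}_{\text{Log}(\rho)}\otimes\mathcal{O}(\xi_i))$. Since $\mathcal{O}(\xi_i)$ is trivial on $\mathbb{P}^1_{(p_1,\ldots,p_m)}$, each restriction $f_i := F_i|_{\mathbb{P}^1_{(p_1,\ldots,p_m)}}$ is a nonzero section of $\mathcal{Q}_\rho$, nonvanishing because a nonzero global section of a locally free sheaf on $\mathbb{P}^1$ cannot vanish on any nonempty open. Let $\mathcal{E}_L\subseteq\mathcal{Q}_\rho$ be the coherent subsheaf generated by $f_1,\ldots,f_k$; by \Cref{Free-sheaf}, $\mathcal{E}_L$ is free of some rank $\ell\leq n$.

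Next, I would carry out the same diagram chase that appears in \Cref{NonRoots}: commutativity of
\[
\begin{tikzcd}
\mathcal{V}_{\text{Log}(\rho)}\otimes\mathcal{O}(\xi) \arrow[r, "\nabla_{\text{Log}(\rho)}\otimes\nabla_\xi"] \arrow[d, "\varphi"] & \mathcal{V}_{\text{Log}(\rho)}\otimes\mathcal{O}(\xi+1) \arrow[d, "\varphi"] \\
\mathcal{Q}_\rho \arrow[r, "\nabla_\rho"] & \mathcal{Q}_\rho
\end{tikzcd}
\]
turns $DF_i\in L$ into $\nabla_\rho f_i \in \mathcal{E}_L$, so $\mathcal{E}_L$ is $\nabla_\rho$-stable. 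Via \Cref{Equiv-cat}, the pair $(\mathcal{E}_L, \nabla_\rho|_{\mathcal{E}_L})$ corresponds to a nonzero subrepresentation $\rho'\subseteq\rho$ of dimension $\ell$; if $\ell<n$, this contradicts irreducibility immediately.

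The main obstacle, and the only ingredient beyond the argument of \Cref{NonRoots}, is ruling out the case $\ell=n$, i.e., the scenario where $\mathcal{E}_L=\mathcal{Q}_\rho$ on the affine open while $L\subsetneq\mathcal{N}(\rho)$. My plan for this step is as follows: fix free-module generators $e_1,\ldots,e_n$ of $\mathcal{N}(\rho)\cong\bigoplus_{j=1}^n R[-\zeta_j]$. If $\mathcal{E}_L=\mathcal{Q}_\rho$, then by clearing poles at $p_1,\ldots,p_m$ I can find a nonzero homogeneous $s\in R$ with $s\cdot e_j\in L$ for every $j$. Applying $D$ iteratively and using the Leibniz rule, $L$ then contains $D_R^{\,t}(s)\cdot e_j$ for all $t\geq 0$, where $D_R$ denotes the restriction of $D$ to $R=\mathcal{N}(\mathbf{1})$. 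The crux of the proof is the elementary but essential claim that for any nonzero homogeneous $s\in R$ the iterates $\{D_R^{\,t}(s)\}_{t\geq 0}$ generate the unit ideal in $R$, which would force $e_j\in L$ for every $j$ and hence $L=\mathcal{N}(\rho)$, contradicting properness and completing the argument.
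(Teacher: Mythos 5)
Your descent argument is exactly the paper's: restrict generators of a $D$-closed submodule $L$ to the affine open, use the commuting square to see that the subsheaf $\mathcal{E}_{L}\subset\mathcal{Q}_{\rho}$ they generate is $\nabla_{\rho}$-stable, and invoke \Cref{Equiv-cat} to get a subrepresentation. The paper disposes of the remaining issue in one line (``as $\mathcal{N}(\rho)$ is of rank $n$, it follows that $\mathcal{E}_{L}\subsetneq\mathcal{Q}_{\rho}$''), whereas you correctly isolate the full-rank case $\ell=n$ as the real crux. Unfortunately, your proposed resolution of that case does not work. The derivation $D_{R}$ on $R=\mathbb{C}[x,y]$ is graded of degree $m-2\geq 0$, so it sends a homogeneous element of degree $d$ to one of degree $d+(m-2)\geq d$. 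If $L$ is proper, your common multiplier $s$ cannot be a constant (else some $e_{j}\in L$ for all $j$), so $\deg s\geq 1$ and every iterate $D_{R}^{\,t}(s)$ is homogeneous of positive degree; the ideal they generate therefore lies inside $\mathfrak{m}=(x,y)$ and is never the unit ideal. The key claim your argument rests on is false.

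Moreover, the $\ell=n$ case cannot be excluded by any argument, because the lemma as literally stated fails for full-rank submodules: $\mathfrak{m}\,\mathcal{N}(\rho)$ is always a proper submodule closed under $D$, since $D(fn)=D_{R}(f)n+fD(n)$ and $D_{R}$ does not lower degree, so $D_{R}(f)\in\mathfrak{m}$ whenever $f\in\mathfrak{m}$ is homogeneous. This is true for every $\rho$, irreducible or not. The statement is only ever applied (in \Cref{NonRoots} and \Cref{IrreducibleReps}) to submodules generated by a proper subset of a minimal generating set, i.e., to submodules of rank strictly less than $n$; for those, $\mathcal{E}_{L}$ has rank $\ell<n$ and your first branch already completes the proof. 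So the fix is not to prove the unit-ideal claim but to add the rank hypothesis (or, equivalently, the hypothesis that $L$ restricts to a proper subsheaf of $\mathcal{Q}_{\rho}$), after which the problematic step disappears and your argument coincides with the paper's.
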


\begin{proof}
    In search of a contradiction, assume that there is some submodule $L$ of $\mathcal{N}(\rho)$ that is closed under the monodromy derivative $D$. Note that as $\mathbb{C}[x,y]$ is a Noetherian ring and $\mathcal{N}(\rho)$ is finitely generated, $L$ is finitely generated; let $\{f_{1}, ..., f_{k}\}$ be a generating set. 
    Observe that each $f_{i}|_{\mathbb{P}^{1}_{(p_{1}, ..., p_{m})}}$ is a nonzero global section of $\mathcal{Q}_{\rho}$ (see \Cref{Associated-Con}) since $\big(\mathcal{V}_{\text{Log}(\rho)} \otimes \mathcal{O}(\xi)\big)|_{\mathbb{P}^{1}_{(p_{1}, ..., p_{m})}} \cong \mathcal{Q}_{\rho}$. 
    Denote with $\mathcal{E}_{L} \subset \mathcal{Q}_{\rho}$ the subsheaf generated by $L$ on $\mathbb{P}^{1}_{(p_{1}, ..., p_{m})}$.
    Now, denoting with $\varphi$ the restriction maps, as the following diagram 
    \begin{center}
    \begin{tikzcd}
    \mathcal{V}_{\text{Log}(\rho)} \otimes \mathcal{O}(\xi) \arrow[r, "\nabla_{\text{Log}(\rho)} \otimes \nabla"] \arrow[d, "\varphi"]
    & \mathcal{V}_{\text{Log}(\rho)} \otimes \mathcal{O}(\xi+1) \arrow[d, "\varphi"] \\
    \mathcal{Q}_{\rho} \arrow[r, "(\nabla_{\text{Log}(\rho)} \otimes \nabla)|_{\mathbb{P}^{1}_{(p_{1}, ..., p_{m})}}"]
    & \mathcal{Q}_{\rho}  
    \end{tikzcd}
    \end{center}
    commutes, it follows that $(Df_{i})|_{\mathbb{P}^{1}_{(p_{1}, ..., p_{m})}} = D|_{\mathbb{P}^{1}_{(p_{1}, ..., p_{m})}}(f_{i}|_{\mathbb{P}^{1}_{(p_{1}, ..., p_{m})}})$. 
    As every bundle over the affine space $\mathbb{P}^{1}_{(p_{1}, ..., p_{m})}= \text{Spec}(A)$ is free by \Cref{Free-sheaf}, $\mathcal{E}_{L}$ can be viewed as a free module of rank $k$ over $A$ whose generators are stable under $D|_{\mathbb{P}^{1}_{(p_{1}, ..., p_{m})}} \cong \nabla_{\rho}$, then it is clear that $\mathcal{E}_{L}$ inherits the connection map $\nabla_{\rho}|_{\mathcal{E}_{L}}$ in such a way where $(\mathcal{E}_{L},\nabla_{\rho}|_{\mathcal{E}_{L}}) \subset (\mathcal{Q}_{\rho}, \nabla_{\rho})$ as connections.
    At this point observe that since there is an equivalence of categories between holomorphic connections on $\mathbb{P}^{1}_{(p_{1}, ..., p_{m})}$ and monodromy representations of $\mathbb{P}^{1}_{(p_{1}, ..., p_{m})}$, then $\rho' \subset \rho$, where $\rho'$ corresponds to $(\mathcal{E}_{L}, \nabla_{\rho}|_{\mathcal{E}_{L}})$. 
    Examining $(\mathcal{E}_{L}, \nabla_{\rho}|_{\mathcal{E}_{L}})$, one sees that $(\mathcal{E}_{L}, \nabla_{\rho}|_{\mathcal{E}_{L}})$ is nonempty, which means that $\rho' \not= 0$; furthermore, as $\mathcal{N}(\rho)$ is of rank $n$, it follows that $(\mathcal{E}_{L}, \nabla_{\rho}|_{\mathcal{E}_{L}}) \subsetneqq (\mathcal{Q}_{\rho}, \nabla_{\rho})$, which forces $\rho' \subsetneqq \rho$. 
    Consequently, $\rho'$ must be a proper subrepresentation of $\rho$; however this contradicts the initial assumption that $\rho$ was irreducible. 
\end{proof}

\begin{Thm}\label{IrreducibleReps}
    Suppose that $\rho: \mathbb{Z}*\mathbb{Z} \rightarrow \text{GL}_{3}(\mathbb{C})$ is an irreducible three-dimensional monodromy representation of $\mathbb{P}^{1}_{(0,1,\infty)}$ and let $(\mathcal{V}_{\text{Log}(\rho)}, \nabla_{\text{Log}(\rho)})$ denote the associated extended logarithmic connection. 
    Allow $\zeta = c_{1}(\mathcal{V}_{\text{Log}(\rho)})$.
    Then, 
    \begin{eqnarray}
        \nonumber
        \mathcal{V}_{\text{Log}(\rho)} \cong
        \begin{cases}
        \mathcal{O}(\frac{\zeta}{3})^{\oplus 3} \text{or } \mathcal{O}(\frac{\zeta}{3} + 1) \oplus \mathcal{O}(\frac{\zeta}{3}) \oplus \mathcal{O}(\frac{\zeta}{3} - 1) & \text{when } \zeta \equiv 0 \text{ mod } 3\\
        \mathcal{O}(\frac{\zeta + 2}{3}) \oplus \mathcal{O}(\frac{\zeta-1}{3}) \oplus \mathcal{O}(\frac{\zeta-1}{3}) & \text{when } \zeta \equiv 1 \text{ mod } 3 \\
        \mathcal{O}(\frac{\zeta+1}{3}) \oplus \mathcal{O}(\frac{\zeta+1}{3}) \oplus \mathcal{O}(\frac{\zeta-2}{3}) & \text{when } \zeta \equiv 2 \text{ mod } 3.
    \end{cases}
    \end{eqnarray}
\end{Thm}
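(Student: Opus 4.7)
The plan is to leverage \Cref{Simple-Module} (which says $\mathcal{N}(\rho)$ is simple as a $D$-module when $\rho$ is irreducible) together with a pure degree-counting argument on the graded $R$-module structure of $\mathcal{N}(\rho)$. The idea is that simplicity, combined with the fact that the polynomial ring $R = \mathbb{C}[x,y]$ has no elements of negative degree, will force the three roots of $\mathcal{V}_{\text{Log}(\rho)}$ to be bunched: any two consecutive roots will differ by at most $1$. Once this bunching is established, the three cases in the theorem follow by a brief arithmetic enumeration using $\xi_1 + \xi_2 + \xi_3 = \zeta$ (\Cref{First-Chern-Class}).

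To carry this out, I would first arrange the roots so that $\xi_1 \geq \xi_2 \geq \xi_3$ and, using \Cref{ModuleProperties}, let $F_1, F_2, F_3$ be generators of $\mathcal{N}(\rho)$ of generating weights $-\xi_1 \leq -\xi_2 \leq -\xi_3$. Since $m = 3$, \Cref{Degree-of-D} says $D$ has degree $1$, so for each $j$ we may write
\[
D(F_j) = a_{j,1} F_1 + a_{j,2} F_2 + a_{j,3} F_3
\]
with $a_{j,i} \in R$ of degree $\xi_i - \xi_j + 1$. Any coefficient whose prescribed degree is negative must vanish; that is, $a_{j,i} = 0$ whenever $\xi_j - \xi_i \geq 2$.

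I would then apply this observation in two stages, mirroring the standard trick already seen in the proof of \Cref{NonRoots}. If $\xi_1 - \xi_2 \geq 2$, then both $a_{1,2}$ and $a_{1,3}$ are forced to vanish, so the $R$-submodule $\langle F_1 \rangle$ is closed under $D$, contradicting \Cref{Simple-Module}. If instead $\xi_2 - \xi_3 \geq 2$, the same reasoning kills $a_{1,3}$ and $a_{2,3}$ (using $\xi_1 \geq \xi_2$ for the former), so $\langle F_1, F_2 \rangle$ is a proper $D$-submodule, again contradicting simplicity. Consequently, $\xi_1 - \xi_2 \leq 1$ and $\xi_2 - \xi_3 \leq 1$.

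Finally, I would enumerate the integer triples $(\xi_1, \xi_2, \xi_3)$ with $\xi_1 \geq \xi_2 \geq \xi_3$, consecutive differences in $\{0, 1\}$, and fixed sum $\zeta$. Splitting on $\zeta \bmod 3$ yields precisely the decompositions in the three cases of the theorem: when $\zeta \equiv 0$, only the all-equal triple and the $(+1, 0, -1)$-pattern triple are compatible; when $\zeta \equiv 1$, only the $(+1, 0, 0)$-pattern works; and when $\zeta \equiv 2$, only the $(+1, +1, 0)$-pattern works. The conceptual heart of the argument is the reduction from simplicity to the gap bound $\xi_i - \xi_{i+1} \leq 1$; the main obstacle to anticipate is being careful with the ordering conventions (generating weights being the negatives of the roots, and $D$ raising the weight by $1$) so that the degree computation correctly produces forced vanishings in the right direction.
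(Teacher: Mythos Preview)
Your proposal is correct and follows essentially the same approach as the paper: both arguments use \Cref{Simple-Module} together with degree counting in $R=\mathbb{C}[x,y]$ to force the consecutive generating weights (equivalently, roots) to differ by at most $1$, and then finish by enumerating against the first Chern class. Your packaging via the uniform gap bound $\xi_i-\xi_{i+1}\le 1$ is slightly more streamlined than the paper's step-by-step case analysis on $D(F)$ and $D(G)$, but the content is the same.
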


\begin{proof}
    Since $\rho$ is three-dimensional, $\mathcal{N}(\rho)$ has rank three, so let $F$, $G$, and $H$ be a set of generators. 
    Further, let $\xi_{\text{min}}, \xi_{1}$, and $\xi_{2}$ be the weights of $F,G$, and $H$, respectively, where by assumption $\xi_{\text{min}}$ is the minimal weight of $\mathcal{N}(\rho)$, with $\xi_{\text{min}} \leq \xi_{1}, \xi_{2}$.
    
    By construction, $D_{\xi_{\text{min}}}(F) \in \mathcal{N}_{\xi_{\text{min}}+1}(\rho)$, and so $D_{\xi_{\text{min}}}(F)$ has weight $\xi_{\text{min}} +1$. 
    Moreover, since $F,G$, and $H$ generate $\mathcal{N}(\rho)$, we must have $D_{\xi_{\text{min}}}(F) = \eta_{F} F + \tau_{F} G + \omega_{F} H$ for some $\eta_{F}, \tau_{F}, \omega_{F} \in \mathbb{C}[x,y]$, not all of which are $0$, by \cite[Lemma $5.8$]{DY25}.
    If $\tau_{F} = \omega_{F} = 0$, then $D_{\xi_{\text{min}}}(F) = \eta_{F} F$, which contradicts \Cref{Simple-Module}. So, without loss of generality, suppose that $\tau_{F} \not= 0$. It follows that $\tau_{F} G$ is of weight $\xi_{\text{min}} + 1$, implying that $\xi_{\text{min}} + 1 = \xi_{1} + \mathfrak{t}_{F}$, where $\text{deg}(\tau_{F}) = \mathfrak{t}_{F}$. Ergo, 
    \begin{eqnarray}
    \nonumber
    \xi_{1} = 
    \begin{cases}
        \xi_{\text{min}} + 1 & \text{when } \mathfrak{t}_{F}=0 \\
        \xi_{\text{min}} & \text{when } \mathfrak{t}_{F}=1
    \end{cases}
    \end{eqnarray}
 
    \noindent since $0 \leq \mathfrak{t}_{F}$ and $\xi_{\text{min}} \leq \xi_{1}$.

    Again, by construction, $D_{\xi_{1}}(G) \in \mathcal{N}_{\xi_{1}+1}(\rho)$, and so $D_{\xi_{1}}(G)$ has weight $\xi_{1} +1$. 
    Moreover, since $F,G$, and $H$ generate $\mathcal{N}(\rho)$, we must have $D_{\xi_{1}}(G) = \eta_{G} F + \tau_{G} G + \omega_{G} H$ for some $\eta_{G}, \tau_{G}, \omega_{G} \in \mathbb{C}[x,y]$, not all of which are $0$, by \cite[Lemma $5.8$]{DY25}.
    If $\eta_{G} = \omega_{G} = 0$, then $D_{\xi_{1}}(G) = \tau_{G} G$, which contradicts $\Cref{Simple-Module}$. 
    Witness that if $\omega_{G} = 0$, then $\omega_{F} \not= 0$, as otherwise $F$ and $G$ generate a submodule of $\mathcal{N}(\rho)$ that is closed under the monodromy derivative, which contradicts \Cref{Simple-Module}. Hence, if $\omega_{G} = 0$, then it follows that $\omega_{F} H$ is of weight $\xi_{\text{min}} + 1$, implying that $\xi_{\text{min}} + 1 = \xi_{2} + \mathfrak{w}_{F}$, where $\text{deg}(\omega_{F}) = \mathfrak{w}_{F}$. Ergo, 
    \begin{eqnarray}
    \nonumber
    \xi_{2} = 
    \begin{cases}
        \xi_{\text{min}} + 1 & \text{when } \mathfrak{w}_{F}=0 \\
        \xi_{\text{min}} & \text{when } \mathfrak{w}_{F}=1
    \end{cases}
    \end{eqnarray}
 
    \noindent since $0 \leq \mathfrak{w}_{F}$ and $\xi_{\text{min}} \leq \xi_{2}$. 
    On the other hand, if $\omega_{G} \not= 0$, then $\omega_{G}H$ is of weight $\xi_{1} + 1$, implying that $\xi_{1} + 1 = \xi_{2} + \mathfrak{w}_{G}$, where $\text{deg}(\omega_{G}) = \mathfrak{w}_{G}$. Ergo, when $\xi_{1} = \xi_{\text{min}}$,
    \begin{eqnarray}
    \nonumber
    \xi_{2} = 
    \begin{cases}
        \xi_{\text{min}} + 1 & \text{when } \mathfrak{w}_{G}=0 \\
        \xi_{\text{min}} & \text{when } \mathfrak{w}_{G}=1
    \end{cases}
    \end{eqnarray}

    \noindent since $0 \leq \mathfrak{w}_{G}$ and $\xi_{\text{min}} \leq \xi_{2}$, and when $\xi_{1} = \xi_{\text{min}}+1$,
    \begin{eqnarray}
    \nonumber
    \xi_{2} = 
    \begin{cases}
        \xi_{\text{min}} + 2 & \text{when } \mathfrak{w}_{G}=0 \\
        \xi_{\text{min}} + 1 & \text{when } \mathfrak{w}_{G}=1 \\
        \xi_{\text{min}} & \text{when } \mathfrak{w}_{G}=2
    \end{cases}
    \end{eqnarray}

    \noindent since $0 \leq \mathfrak{w}_{G}$ and $\xi_{\text{min}} \leq \xi_{2}$.

    \begin{figure}
        \centering
        \begin{tikzpicture}
        \node(0){$-\xi_{\text{min}}$}
        child[sibling distance=29mm]{node{$-\xi_{\text{min}}$}
            child{node{$-\xi_{\text{min}}$}}
            child{node{$-\xi_{\text{min}} - 1$}}
        }
        child[sibling distance=29mm]{node{$-\xi_{\text{min}} - 1$}
            child{node{$-\xi_{\text{min}} - 1$}}
            child{node{$-\xi_{\text{min}} - 2$}}
        };
        \end{tikzpicture}
        \caption{Tree of Roots}
        \label{TreeFig}
    \end{figure}

    Now, as the weights of the generators of $\mathcal{N}(\rho)$ are the negatives of the roots of $\mathcal{V}_{\text{Log}(\rho)}$ by \Cref{ModuleProperties}, we have four combinations for the roots, given by \Cref{TreeFig}.
    We read \Cref{TreeFig} from left to right and label each path from one to four.

    Thus, by \Cref{First-Chern-Class}, we have $\zeta = -3\xi_{\text{min}} - \kappa$ where $\kappa \in \{0,1,2,3\}$, and considering $\zeta$ mod $3$ leads to: case $(1)$ and $(4)$, from \Cref{TreeFig}, belonging to $\zeta \equiv 0 \text{ mod } 3$, case $(2)$ belonging to $\zeta \equiv 2 \text{ mod } 3$, and case $(3)$ belonging to $\zeta \equiv 1 \text{ mod } 3$. Lastly, the result follows after one notes that $-\xi_{\text{min}} = \frac{\zeta + \kappa}{3}$.
\end{proof}

\begin{Rem}\label{Expectations}
    We now state the final remarks.
    \begin{enumerate}
        \item Examining the use of the monodromy derivative in the proof of \Cref{IrreducibleReps}, combined with \Cref{Degree-of-D}, one expects that the roots of any irreducible monodromy representation of dimension $d$ of $\mathbb{P}^{1}_{(0,1,\infty)}$ can be given by continuing the tree diagram in \Cref{TreeFig} up to depth $d$. 
        Continuing along the same line of thought, given an irreducible monodromy representation of dimension $d$ of $\mathbb{P}^{1}_{(p_{1}\cdots p_{m})}$, one can expect a hypertree analogous to \Cref{TreeFig} where the minimum weight is directly connected to all integers in the interval $[-\xi_{\text{min}}, -\xi_{\text{min}}-(m-2)]$ when $m \geq 2$, and continued along in a similar fashion as of the proof of \Cref{IrreducibleReps} up to depth $d$.
        \item Following \Cref{Root-Bound}, \Cref{ReducibleBound}, and the bound found in \cite[\S $2.1$]{MT2021}, we might also expect the roots of any $d$-dimensional monodromy representation (reducible or irreducible) $\rho$ of $\mathbb{P}^{1}_{(p_{1}\cdots p_{m})}$ to be bounded by $-m < \xi_{i} \leq 0$ for $ 1 \leq i \leq d$.
        \item The monodromy derivative has allowed us to translate the properties of the associated extended vector bundles arising from a monodromy representation into purely alegrbaic properties of $\mathcal{N}(\rho)$. Particularly, in the case of irreducible monodromy representations, the monodromy derivative along with the first Chern class is sufficient for obtaining the decomposition.   
    \end{enumerate}
\end{Rem}

\bibliographystyle{siam}
\bibliography{References}

\end{document}